\long\def\beginskip#1\endskip{}
\def\endskip{}
\newcommand\bY{\mathbf{Y}}
\newcommand{\PP}{{\rm P}}
\newcommand{\EE}{{\rm E}}
\newcommand{\lle}{\,\,{\lesssim}\,\,}
\newcommand{\Var}{\mathrm{Var}}
\newcommand{\eps}{\varepsilon}
\renewcommand{\phi}{\varphi}
\newcommand{\given}{\,|\,}
\renewcommand{\a}{\alpha}
\newcommand{\argmax}{\mathop{\rm argmax}}
\newcommand{\argmin}{\mathop{\rm argmin}}
\renewcommand{\b}{\beta}
\renewcommand{\t}{\tau}
\newcommand{\tr}{{\rm tr}}
\newcommand\blfootnote[1]{%
  \begingroup
  \renewcommand\thefootnote{}\footnote{#1}%
  \addtocounter{footnote}{-1}%
  \endgroup
}
\newtheorem{theorem}{Theorem}[section]
\newtheorem{lemma}[theorem]{Lemma}
\theoremstyle{definition}
\begin{document}


\title{\bf An asymptotic analysis of distributed nonparametric methods\\[.5cm]}

\author{
Botond Szab\'o\\[.5ex]
{\em Leiden University}
\and 
Harry van Zanten\\[.5ex]
{\em University of Amsterdam}
\blfootnote{The research leading to these results has received funding from 
the Netherlands Science foundation NWO and from the European Research Council 
under ERC Grant Agreement 320637.}}

\date{}


\maketitle

%

\begin{abstract}
We investigate and compare the fundamental performance of several distributed 
learning methods that have 
been proposed recently. 
We do this in the context of a  distributed version of the classical 
signal-in-Gaussian-white-noise model, which  serves as a benchmark model for 
studying performance in this setting.
The results show how the design and tuning of a distributed method
 can have great impact on convergence rates and validity of uncertainty quantification.
  Moreover, we highlight the difficulty of  designing 
 nonparametric distributed procedures that automatically adapt to smoothness.
\end{abstract}

%
%


\numberwithin{equation}{section}

\section{Introduction}

Both in  statistics and machine learning there has been substantial interest in the design and 
study of distributed statistical or learning methods in recent years. One driving reason is the fact that in 
certain applications datasets have become so large that it is often unfeasible, or computationally undesirable, 
to carry out the analysis on a single machine. In a  distributed method the data are divided over 
a cluster consisting of several machines and/or cores. The  machines in the cluster  then process their   
data locally, after which 
the local results are somehow aggregated on a central machine to finally produce the overall outcome of 
the statistical analysis. Distributed methods are not only used for computational reasons, but are 
for instance also  of interest in situations where privacy 
is important and it is undesirable that all data are handled at a single location. Moreover, 
there are applications in which data are by construction gathered at multiple locations and 
first processed locally, before being combined at a central location.

Over the last years a  variety of distributed methods have been proposed. 
Recent examples include Consensus Monte Carlo (\cite{scott:etal:2013}), WASP (\cite{srivastava:2015a}), 
and distributed GP's (\cite{deisenroth:2015}), to mention but a few.
Most papers on distributed methods do extensive experiments on simulated, benchmark and real data to numerically assess
and compare the performance of the various methods. Some papers also 
derive a number of theoretical properties. 
Theoretical results on the performance 
of distributed methods are not yet widely available however and  there is certainly 
no common theoretical framework in place that allows a clear theoretical comparison of methods
and the development of an understanding of fundamental performance guarantees and limitations. 
 
Since a better  theoretical understanding of distributed methods can help to pinpoint 
fundamental difficulties and opportunities, we develop a  framework in this paper 
which allows us to study and compare the performance of various methods. We are in particular 
interested in high-dimensional, or nonparametric problems. It is by now well known that 
the performance of  learning or statistical methods in such settings depends crucially 
on wether or not a method succeeds in realizing the correct bias-variance trade-off, 
or, in different terminology, succeeds in balancing under- and overfitting.  
For classical, non-distributed settings we have a rather well-developed understanding of how 
methods should be tuned to achieve a proper bias-variance trade-off. 
For distributed methods however, such theory is currently not yet available.

To be able to develop relevant theory we study an idealized 
model, which is a distributed version of the canonical ``signal-in-white-noise'' model that 
serves as an important benchmark model in mathematical statistics
(see for instance \cite{tsybakov:2009, johnstone, gine:nickl:2016}). The model is on the one 
hand rich enough to be interesting, in the sense that it is really distributed in nature and the 
unknown object that needs to be learned is truly infinite-dimensional. On the other hand 
it is tractable enough to allow detailed mathematical analysis. 
In the non-distributed case the signal-in-white-noise model is well known to be 
very closely related to other nonparametric models, such as nonparametric regression and density estimation.
(This can be made very precise in the context of Le Cam's theory of limits of experiments 
(e.g.\ \cite{LeCam}, \cite{BrownLow}, \cite{Nussbaum}).) Similarly, the distributed signal-in-white-noise model that we
consider in this paper provides a unified framework to compare methods that were originally introduced 
in different settings.  We introduce the model in Section \ref{sec: model}.

It is not difficult to see that if the number of machines $m$ is relatively large 
with respect to the total sample size, or signal-to-noise-ratio $n$, then doing things 
completely naively in the distributed case leads to a sub-optimal bias-variance trade-off
(see also the simulation example in Section \ref{sec: model}). In particular, just computing the ``usual'' estimators on every local 
machine and then averaging them on the central machine typically leads to a global estimator with a 
bias that is too large. To achieve good performance, the trade-off has to be adjusted 
somehow. This can in principle be done in various ways. For instance by locally choosing 
the ``wrong'' settings for tuning parameters on purpose, or, in a Bayesian setting, by 
adjusting  the likelihood (e.g.\ raising it to some power) or  by adjusting the prior. In 
Section \ref{sec: nonadapt} we study to what degree various methods that have been proposed in the literature 
succeed in ultimately achieving the right trade-off. We will see that some are more
successful than others in this respect.

An important observation that we make is that the methods that are shown 
to work well in Section \ref{sec: nonadapt} all use information on aspects of the true signal
that are in principle unknown, such as its degree of regularity. A key question is 
whether in distributed settings it is fundamentally possible to set tuning parameters correctly in 
a purely data-driven way, without using such information. In the non-distributed setting
it is well known that such {adaptive methods} indeed exist (e.g.\ \cite{tsybakov:2009} or 
\cite{gine:nickl:2016}). In the distributed 
case that we study here however, this is much less clear. In Section \ref{sec: adap} we 
show that  using a distributed version of a standard adaptation method 
that is known to work in the 
non-distributed case, such as maximum marginal likelihood empirical  Bayes, can lead to sub-optimal 
results in the distributed setting. We will argue that this seems to be 
a fundamental issue and that we expect that correct automatic setting 
of tuning parameters in distributed methods is fundamentally more challenging 
than in the classical, non-distributed case. We believe this is an important issue  and want to highlight it as an important and interesting topic for future research.

The remainder of the paper is organized as follows. 
In the next section we introduce the distributed version of the 
signal-in-white-noise model and provide a simple simulation 
example to show that in a distributed setting, naively combining inferences 
from local machines into a global estimator may produce misleading results.
In Section \ref{sec: nonadapt} we study the performance of a number of 
 Bayesian procedures for signal reconstruction in the distributed signal-in-white-noise model
introduced in Section \ref{sec: model}. We include a number of methods
that have recently been proposed in the literature. We show that 
some succeed in obtaining the appropriate bias-variance trade-off, but others do not.
Moreover, the ones that do produce good results are all non-adaptive, in the 
sense that they use knowledge of the smoothness of the unkown signal 
to set their tuning parameters. In the final Section \ref{sec: adap} we consider 
the more realistic setting in which this smoothness is unknown. 
We study a distributed method that has been proposed for data-driven tuning of the hyperparameters
and show that there exist ``difficult signals'', which this method 
can not recover in the distributed model at an optimal rate. 
We argue that this appears to be a fundamental issue, and that 
designing procedures that automatically adapt to smoothness is fundamentally 
more challenging in the distributed framework. Mathematical proofs 
are collected in appendix Sections \ref{sec: proofs} and \ref{sec: proofs2}.

\section{Distributed signal-in-white-noise model}
\label{sec: model}

Consider the problem of estimating a signal $\theta \in \ell^2$ 
in Gaussian white noise. In the usual setting there is a single observer 
that observes every coefficient $\theta_i$ with additive Gaussian noise 
with variance $\sigma^2/n$, say. In the distributed version of the model we divide the ``precision budget'' $n$
over $m$ different observers, so that each one observes the signal in 
Gaussian noise with variance $\sigma^2m/n$, independent of the others.
In other words, observer $j$ has data $Y^j_1, Y^j_2, \ldots$ satisfying 
\begin{equation}\label{eq: local}
Y^j_i = \theta_{i} + \sqrt\frac{\sigma^2 m}{n} Z^j_i, \qquad i =1, 2, \ldots, 
\end{equation}
where the $Z^j_i$ are independent, standard Gaussian random variables.
We call the $m$ independent sub-problems in which the 
signal-to-noise ratio is $\sigma^2m/n$ the ``local'' problems.

The classical, non-distributed signal-in-white-noise model is obtained from 
the distributed model by aggregating 
all the local data. Indeed, if for $j =1, 2, \ldots$ we define $Y_i = m^{-1}\sum_{j=1}^m Y^j_i$,
then 
\begin{equation}\label{eq: agg}
Y_i = \theta_{i} + \sqrt\frac{\sigma^2}{n} \tilde Z_i, \qquad i =1, 2, \ldots,
\end{equation}
where the $\tilde Z_i = m^{-1/2} \sum_{j=1}^m Z^j_i$ are independent standard normal variables. 
This model has been studied extensively in the literature, serving as a canonical 
model for understanding the performance of high-dimensional or nonparametric statistical procedures. 
It is well known for instance that if the true signal $\theta$ belongs to an 
ellipsoid or a hyper rectangle of the form 
\[
\{\theta \in \ell^2 : \sum i^{2\beta} \theta_i^2 \le M^2\} \ \text{or} \ \{\theta\in\ell^2:\,  \sup_i( i^{1+2\beta} \theta_i^2)\leq M^2\}
\]
for some $\beta, M > 0$, then the optimal rate of convergence of estimators (relative to the $\ell^2$-norm) is of the order
$n^{-\beta/(1+2\beta)}$. Moreover, there exist so-called adaptive estimators, 
which achieve this rate without using knowledge about the parameters $\beta$ or $M$ that describe the complexity, 
or regularity of the true signal. See, for instance, \cite{tsybakov:2009} or 
\cite{gine:nickl:2016}.
Our central question is whether or not the same results can be obtained 
in the distributed setting in which each of the $m$ different observers first 
separately make  inference about the signal, and then the local  estimates 
are aggregated into one joint estimator. 

The specific examples of distributed procedures that we consider 
 in this paper are about distributed Bayesian methods. These methods have in common that  each local observer first 
 chooses a prior distribution and computes the corresponding 
local posterior distribution using the local data (or an appropriate modification). In the next step the $m$ local posteriors are somehow aggregated into a 
global posterior-type distribution, which is then used to produce an estimate 
of the signal and/or a quantification of the associated uncertainty. 
In general there is  no guarantee that this ``aggregated posterior'' resembles the 
 posterior distribution that would be obtained in the non-distributed setting, using all the data at once.
In particular, it is not clear beforehand how a distributed  Bayes method should be constructed in order
to have  good theoretical properties, like optimal convergence rates or adaptation properties. 
In this paper we investigate various  distributed  methods
that have been proposed from this point of view.

To see that interesting things can happen it is exemplifying to compare 
the results of a distributed and a non-distributed (Bayesian) analysis of simulated data. 
Concretely, we consider a  true signal $\theta$ consisting 
of the Fourier coefficients of the function shown in Figure \ref{fig: signal}.
\begin{figure}
\begin{center}
\includegraphics[scale=0.3]{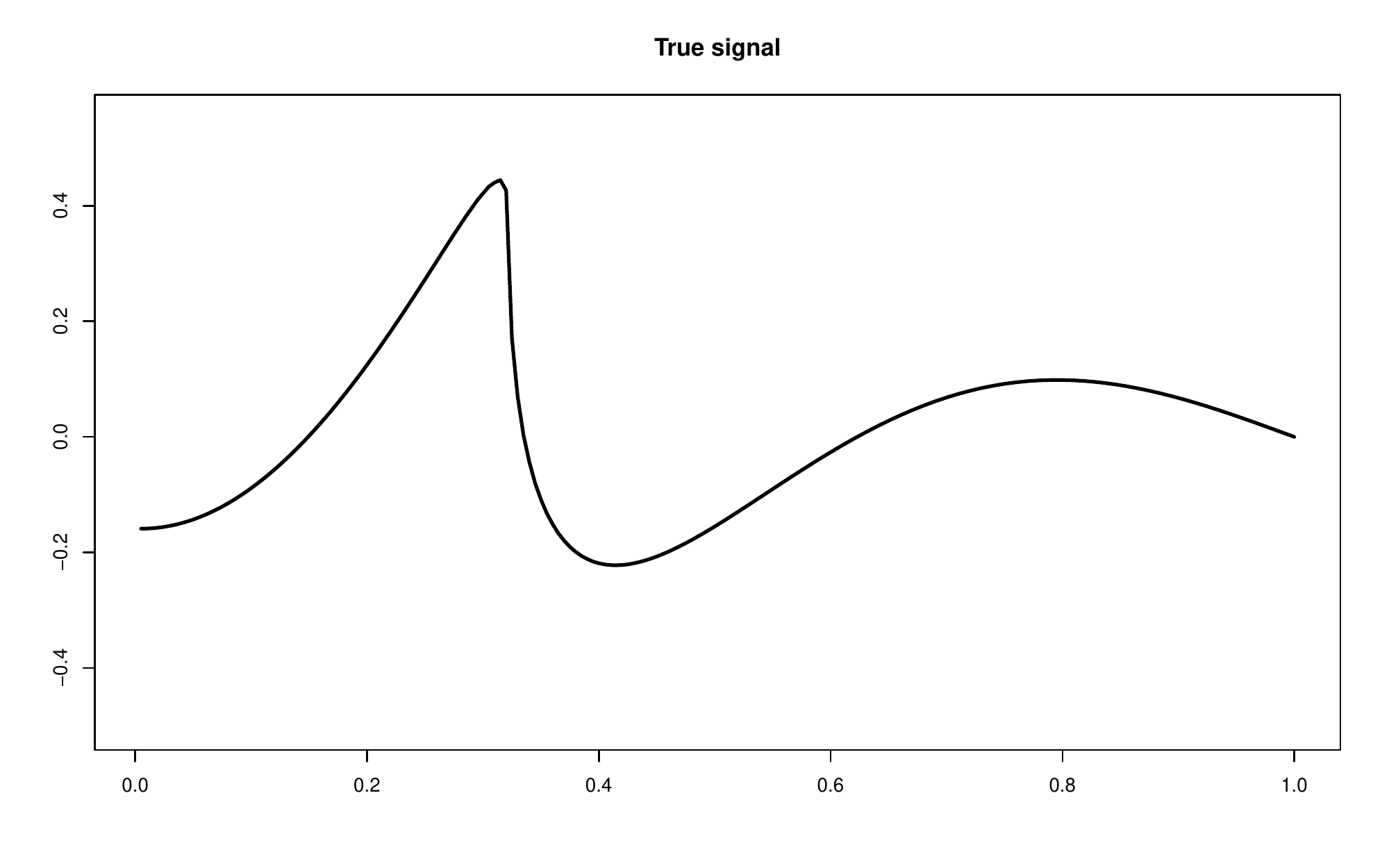}
\end{center}
\caption{True signal.}\label{fig: signal}
\end{figure}
For this signal we simulate data according to \eqref{eq: local}, with $\sigma = 1$, $m = 40$ and $n=120\times 40 = 4800$. 
Then for every local observer  a Bayesian procedure is carried out with a Gaussian prior on $\theta$, 
postulating that the coordinates $\theta_i$ are independent and $N(0, i^{-1-2\alpha})$-distributed.  
The hyperparameter $\alpha$, which describes the regularity of the prior, is determined 
using a distributed version of maximum marginal likelihood, as described in Section \ref{sec: adap}. 
This analysis leads to $m=40$ local posterior distributions. These are 
then combined to produce an overall posterior distribution for the signal. 
The precise procedure is described in Section \ref{sec: adap}. 
The resulting 
estimator for the  signal, together with pointwise $95\%$ credible intervals, is shown  in the left plot in  
Figure \ref{fig: dis}. 
\begin{figure}
\includegraphics[scale=0.3]{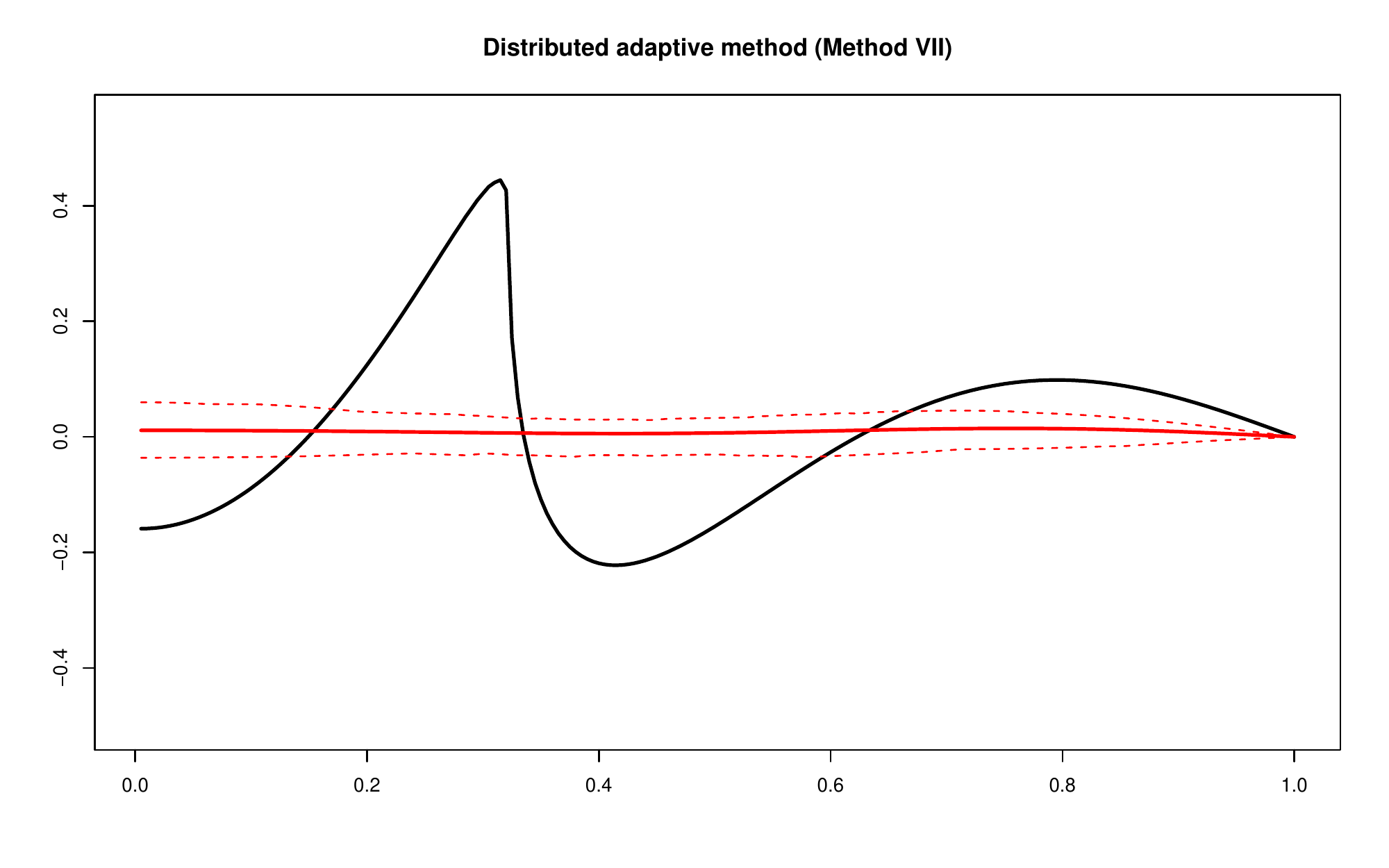}
\includegraphics[scale=0.3]{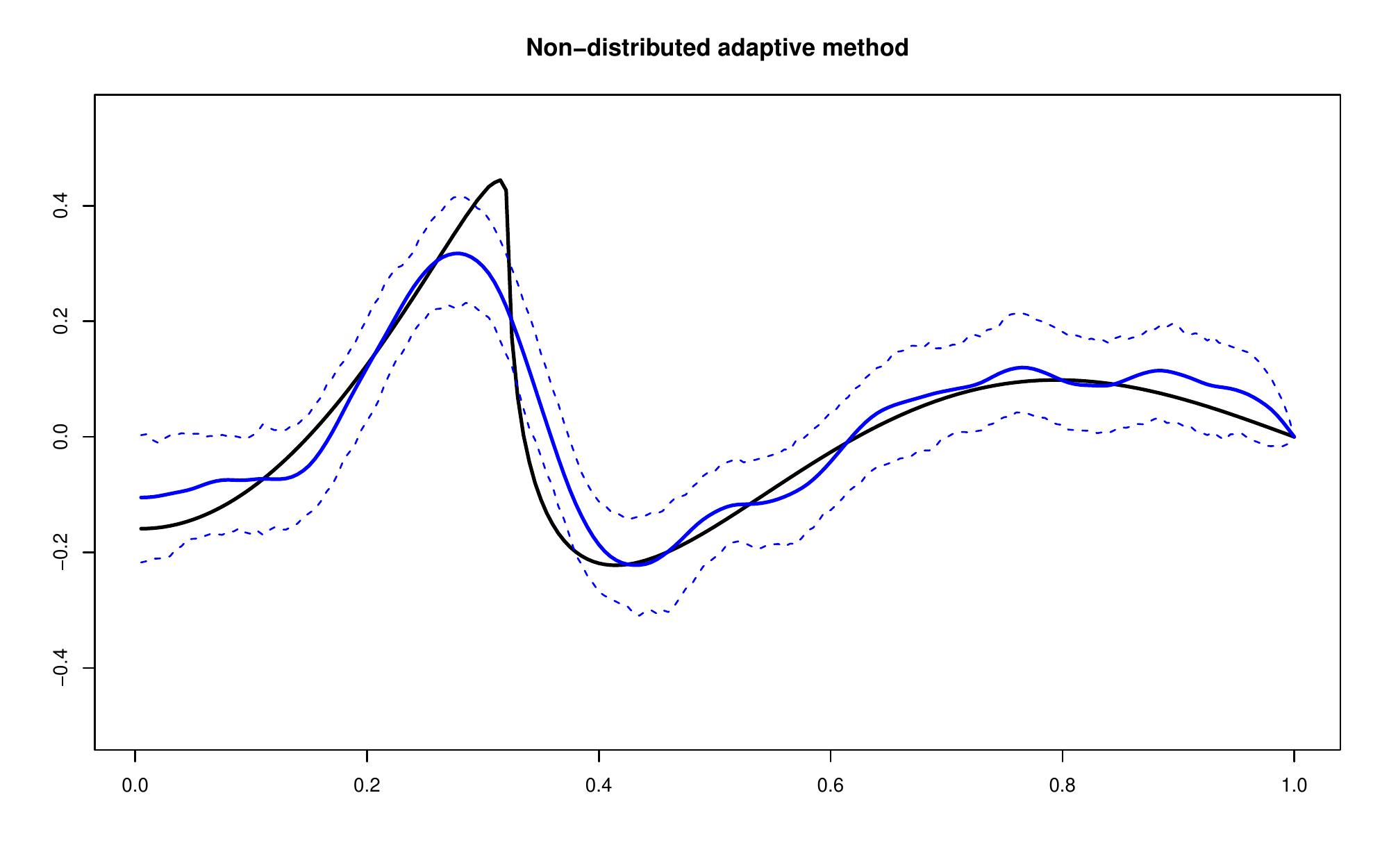}
\caption{Signal reconstruction using the distributed method (left) and the 
non-distributed method (right).}\label{fig: dis}
\end{figure}
The corresponding non-distributed result is obtained by first aggregating all local 
data as in \eqref{eq: agg} and then carrying out  the same Bayesian procedure 
 on these complete data. The resulting non-distributed reconstruction of the signal 
is shown on the right in Figure \ref{fig: dis}. 

The non-distributed version of this method was studied theoretically for instance in \cite{knapik:etal:2016}
and \cite{szabo:etal:2015}, where it was 
shown that the method is adaptive and rate-optimal. The simulation suggests however that 
an apparently reasonable  distributed analogue of the method does not necessarily inherit these 
favourable properties. The procedure seems to be underfitting and the credible intervals 
appear to be too narrow. We will argue that this is in some sense a fundamental issue and in the next sections 
we will study various proposed distributed methods to investigate to what degree
they succeed in avoiding or solving these problems.

\section{Results for non-adaptive procedures}\label{sec: nonadapt}

In this section we study the performance of a number of 
Bayesian procedures for signal reconstruction in the distributed signal-in-white-noise model
introduced in Section \ref{sec: model}. All methods involve putting a 
prior distribution  on the unknown signal $\theta \in \ell^2$ in each local problem
and then combining the resulting local posteriors into one global posterior-type
distribution. To be able to compare the various methods we consider the 
same Gaussian process (GP) prior in every case, namely the prior 
 \begin{align}
\Pi(\cdot|\a)=\bigotimes_{i=1}^{\infty} N(0, i^{-1-2\a}),\label{def: prior}
\end{align}
which postulates that the coefficients $\theta_i$ of the signal $\theta$
are independent and $N(0, i^{-1-2\a})$-distributed. The hyper parameter $\a > 0$ 
 controls the regularity of the prior. 
(Some of the methods we consider use exactly this prior, others modify it in a certain way 
with the aim of achieving better performance.) 
The global posterior-type distribution 
depends on all the data $\bY = (Y^j_i: j = 1, \ldots, m; 
i = 1,2, \ldots)$ and is denoted by $\Pi(\cdot \given  \bY)$. 
It is generally some type of average of the local posteriors, 
but its precise construction differs between proposed methods. 
We will see that this can have a significant  effect on performance.

We take an asymptotic perspective and investigate in every case the rate 
at which the global posterior contracts around the true signal as $n \to \infty$ 
relative to the $\ell^2$-norm,
which is as usual defined by $\|\theta\|^2_2 = \sum \theta_i^2$.
For a sequence of positive numbers $\eps_n \to 0$ we say that the 
global posterior contracts at the rate $\eps_n$ around the true signal $\theta_0$ if 
for all sequences $M_n \to \infty$,
\[
\EE_{\theta_0}\Pi(\theta \in \ell^2 : \|\theta -\theta_0\|_2 > M_n\eps_n\given \bY) \to 
0 
\]
as $n \to \infty$. 
This means that asymptotically, all posterior mass is 
concentrated in balls around the true signal $\theta_0$ with $\ell^2$-radius of the order $\eps_n$.

Additionally, we study how well the posterior quantifies the remaining uncertainty. 
Specifically, we  consider the coverage probabilities of credible balls around the 
global posterior mean. These credible sets are constructed by first computing 
the mean $\hat\theta$ of the global ``posterior'' $\Pi(\cdot\given \bY)$. Then for a
level $\gamma \in (0,1)$, the posterior is used to determine the radius $r_\gamma$ 
such that the ball around $\hat\theta$ with radius $r_\gamma$ receives $1-\gamma$
posterior mass, i.e.\
\[
\Pi(\theta:\, \|\theta-\hat\theta\|_2\leq r_\gamma  |\mathbf{Y})=1-\gamma.
\]
For $L > 0$, the credible set $\hat{C}(L)$ is subsequently defined by 
\begin{align}
\hat{C}(L)=\{\theta:\, \|\theta-\hat\theta_n\|_2\leq Lr_\gamma\}.
\label{def: credible}
\end{align}
(The extra constant $L$ gives some added flexibility, for $L=1$ we obtain 
an exact $1-\gamma$ credible set.)  
We are interested in the coverage probabilities $\PP_{\theta_0}(\theta_0 \in \hat C(L))$. 
If this tends to $0$ as $n \to \infty$, the credible sets are asymptotically not 
frequentist confidence sets, hence give a misleading quantification of the uncertainty. 
Ideally, the coverage probabilities stay bounded away from $0$ as $n \to \infty$.

In the non-distributed case $m=1$ it is well known that both the rate 
at which the posterior contracts around the truth and the behaviour 
of the coverage probabilities of credible sets depend crucially 
on how the hyper parameter $\a$ is tuned. 
The correct bias-variance trade-off is achieved if $\alpha$ 
is in accordance with the regularity of the unknown signal. 
To make this precise, we will consider signals belonging to hyper rectangles
of the form
\begin{equation}\label{eq: hb}
H^\beta(M) = \Big\{\theta \in \ell^2: \sup_i (i^{1+2\beta}\theta^2_i) \le M^2\Big\}
\end{equation}
for some $\beta, M > 0$.  
It is shown for instance in \cite{knapik:etal:2011} for the non-distributed case
that if $\theta_0 \in H^\beta(M)$ 
and  we set $\alpha = \beta$, then the posterior contracts around $\theta_0$ 
at the optimal rate $n^{-\beta/(1+2\beta)}$. Moreover, 
for $L$ large enough  
it then holds that $\PP_{\theta_0}(\theta_0 \not\in \hat{C}(L)) \le \gamma$. 
Hence, in the non-distributed case it is optimal to choose the hyper parameter 
$\a$ in such a way that the regularity $\alpha$ of the prior matches the regularity 
$\beta$ of the true signal. Moreover, this choice leads to a  contraction rate that is optimal 
in a minimax sense.

In the remainder of this section we investigate distributed methods from  
this point of view. We will see that the different proposed methods lead 
to different behaviours in terms of contraction rates and coverage. We 
stress that the results in this section are non-adaptive, in the sense
that we allow the tuning parameter $\alpha$ and other aspects of the constructions
to use knowledge of the regularity $\beta$ of the true signal. 
This is of course not realistic. It is however important to first 
understand for every method whether ideally, if the value of $\beta$ is given to us by an oracle, it 
is possible to tune the method optimally. Whether this is also possible
adaptively, without knowing $\beta$, is then the next natural question, 
which we address in Section \ref{sec: adap}.

\subsection{Naive  averaging of local posterior draws}\label{sec: naive_approaches}

Recall that we have $m$ local observers that each have a dataset $\mathbf Y^j = (Y^j_1, Y^j_2, \ldots)$ of noisy
 coefficients satisfying \eqref{eq: local}. The aim is to recover the true sequence
of coefficients $\theta$. 

As a starting point, and to have a baseline case to compare the other methods to, 
we analyse the  naive distributed approach in which in every local 
problem we simply use the prior $\Pi(\cdot \given \alpha)$ defined by \eqref{def: prior}, 
with $\alpha = \beta$ equal to the regularity of the true sequence $\theta$, in the sense of \eqref{eq: hb}.  
Every local observer then computes its corresponding local posterior, $\Pi^j(\cdot \given \mathbf Y^j)$. 
By Bayes' formula this is given by 
\[
d\Pi^j(\theta \given \mathbf Y^j) \propto p(\mathbf Y^j \given \theta)\,d\Pi(\theta \given \beta), 
\]
where the  likelihood for the $j$th local problem is given by 
\begin{equation}\label{eq: lik}
p(\mathbf Y^j \given \theta)  \propto \prod e^{-\frac12 \frac{n(Y^j_i-\theta_i)^2}{\sigma^2 m}}.
\end{equation}
Finally these local posteriors are combined into a  global, ``average posterior'' ${\mathbf\Pi_{I}(\cdot \given \mathbf Y)}$ by postulating that a draw
from this global posterior is generated by first drawing once from each local posterior
and then averaging these $m$ independent draws. (Formally, this means that the global
``posterior''
${\mathbf\Pi_{I}(\cdot \given \mathbf Y)}$ is the convolution of the rescaled 
local posteriors 
$\Pi^1(m \times \cdot\given \mathbf Y^1)$, \ldots, $\Pi^m(m \times \cdot\given \mathbf Y^m)$).  

This distributed  method is conceptually very simple, but it turns out that neither from the point of view of contraction rates, 
nor from the point of view of uncertainty quantification it performs very well. The reason 
is basically that although the choice $\alpha = \beta$ of the tuning parameter of the prior correctly matches 
squared bias, variance and posterior spread in the local problems, the averaging procedure results in 
a global ``posterior'' for which the spread and the variance of the mean are too small relative to the squared bias. 
The following theorem asserts that for every smoothness level $\beta > 0$ 
there exist $\beta$-regular truths for which the 
  contraction rate of the posterior deteriorates substantially and for which the uncertainty quantification 
by the credible sets \eqref{def: credible} constructed from the global posterior 
${\mathbf\Pi_{I}(\cdot \given \mathbf Y)}$ is useless, 
no matter how far they are blown up by a constant $L > 0$. 

\bigskip

\begin{theorem}[naive averaging]\label{thm: naive_avg}
For every $\beta, M>0$ there exists a $\theta_0\in H^{\beta}(M)$ such that for  small enough  $c>0$,
\begin{align*}
E_{\theta_0}\mathbf\Pi_{I}(\theta:\, \|\theta-\theta_0\|_2\leq cm^{\frac{\beta}{1+2\beta}}n^{-\frac{\beta}{1+2\beta}}|\beta,\mathbf{Y} )\rightarrow 0
\end{align*}
as $m \to \infty$ and $n/m \to \infty$. 
Furthermore, for all $L>0$ it holds that 
\[
P_{\theta_0}\big(\theta_0\in \hat{C}(L) \big)\rightarrow 0.
\]
\end{theorem}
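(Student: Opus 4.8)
The plan is to exploit the explicit Gaussian structure of the local posteriors to get a closed-form description of the global ``posterior'' $\mathbf\Pi_I(\cdot\given\mathbf Y)$, and then to choose a worst-case truth $\theta_0$ concentrated in a dyadic frequency block where the squared bias of the averaged estimator dominates both the posterior spread and the sampling variance of the posterior mean. Concretely, for the $j$th local problem with prior $N(0,i^{-1-2\beta})$ and noise variance $\sigma^2 m/n$, the local posterior of $\theta_i$ is Gaussian with mean $\hat\theta^j_i = \frac{n/(\sigma^2 m)}{n/(\sigma^2 m)+i^{1+2\beta}}Y^j_i$ and variance $s_i^2 = \big(n/(\sigma^2 m)+i^{1+2\beta}\big)^{-1}$. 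A draw from $\mathbf\Pi_I$ is $m^{-1}\sum_{j=1}^m \vartheta^j$ with $\vartheta^j$ independent draws from the $j$th local posterior, so the $i$th coordinate of such a draw is Gaussian with mean $\bar{\hat\theta}_i := m^{-1}\sum_j \hat\theta^j_i = \frac{n/(\sigma^2 m)}{n/(\sigma^2 m)+i^{1+2\beta}}\,m^{-1}\sum_j Y^j_i = \frac{n/(\sigma^2 m)}{n/(\sigma^2 m)+i^{1+2\beta}}Y_i$ (with $Y_i$ the aggregated data \eqref{eq: agg}), and variance $s_i^2/m$. So the global posterior mean $\hat\theta$ is exactly the classical Gaussian-prior posterior mean but built with the \emph{wrong} effective precision: the shrinkage weight behaves like that of a problem with precision $n/m$ rather than $n$, while the posterior spread has been shrunk by an extra factor $1/m$.

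First I would make this quantitative. Pick $N = N_n$ with $N^{1+2\beta}\asymp n/(\sigma^2 m)$ (the local bias-variance crossover frequency), and set $\theta_0$ to have $\theta_{0,i}^2 \asymp M^2 i^{-1-2\beta}$ for $i$ in a block around a constant multiple of $N$ and $0$ otherwise; one checks $\theta_0\in H^\beta(M)$. For such $i$, the shrinkage weight $w_i = \frac{n/(\sigma^2 m)}{n/(\sigma^2 m)+i^{1+2\beta}}$ is bounded away from $0$ and $1$, so the squared bias $\sum_i (1-w_i)^2\theta_{0,i}^2 \asymp \sum_{i\asymp N} \theta_{0,i}^2 \asymp N^{-2\beta} \asymp (n/m)^{-2\beta/(1+2\beta)} = m^{2\beta/(1+2\beta)} n^{-2\beta/(1+2\beta)}$, which is exactly the rate in the theorem statement, and is strictly larger (by a factor $m^{2\beta/(1+2\beta)}\to\infty$) than the target $\ell^2$-radius-squared $c^2 m^{2\beta/(1+2\beta)}n^{-2\beta/(1+2\beta)}$ once... wait — these are the same order. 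The point is finer: the posterior is concentrated, around $\hat\theta$, at distance $\asymp \sqrt{\sum_i s_i^2/m} \asymp \sqrt{N/m}\cdot (n/(\sigma^2 m))^{-1/2}\cdot(\text{const})$, which I would compare with $\|\hat\theta-\theta_0\|_2$. The key inequality to establish is $\EE_{\theta_0}\|\hat\theta-\theta_0\|_2 \ge 2Lr_\gamma$ and $\ge$ a large multiple of the posterior radius, for the chosen $\theta_0$; since $\hat\theta-\theta_0$ has deterministic part of squared size $\asymp N^{-2\beta}$ plus a mean-zero stochastic part of squared expectation $\sum_i w_i^2 \sigma^2 m/(n)\asymp N\sigma^2 m/n\asymp N^{-2\beta}$ as well, $\|\hat\theta-\theta_0\|_2 \asymp N^{-\beta}\asymp m^{\beta/(1+2\beta)}n^{-\beta/(1+2\beta)}$ with high probability, whereas the posterior radius $r_\gamma\asymp \sqrt{\sum_i s_i^2/m}\asymp N^{1/2}(n/m)^{-1/2}m^{-1/2}\asymp N^{-\beta}m^{-1/2}$, smaller by a factor $m^{1/2}\to\infty$.

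Now I would assemble the two conclusions. For the contraction statement, the ball $\{\|\theta-\theta_0\|_2\le c\, m^{\beta/(1+2\beta)}n^{-\beta/(1+2\beta)}\}$ is, for small $c$, disjoint from a ball of radius $2r_\gamma'$ (a generous multiple of the posterior spread) around $\hat\theta$ on the event $\|\hat\theta-\theta_0\|_2 > (c+2c')m^{\beta/(1+2\beta)}n^{-\beta/(1+2\beta)}$, which has probability $\to 1$ by a Gaussian concentration/Chebyshev argument on $\sum_i(\hat\theta_i-\theta_{0,i})^2$ (this is where one uses that the stochastic fluctuations of $\|\hat\theta-\theta_0\|_2^2$ around its mean are of smaller order — a standard $L^2$ computation for a weighted sum of squared Gaussians). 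On that event $\mathbf\Pi_I(\|\theta-\theta_0\|_2\le c\,m^{\beta/(1+2\beta)}n^{-\beta/(1+2\beta)}\given\beta,\mathbf Y)\le \mathbf\Pi_I(\|\theta-\hat\theta\|_2 > c'' m^{\beta/(1+2\beta)}n^{-\beta/(1+2\beta)}\given\mathbf Y)$, and since the posterior is Gaussian with total spread $\sqrt{\sum_i s_i^2/m}$ of strictly smaller order, Markov's inequality (or the exact Gaussian tail) drives this to $0$; taking expectations and using boundedness gives the first display. For the coverage statement, $\theta_0\in\hat C(L)$ means $\|\hat\theta-\theta_0\|_2\le L r_\gamma$; but we have just argued $r_\gamma$ is of order $N^{-\beta}m^{-1/2}$ while $\|\hat\theta-\theta_0\|_2$ is of order $N^{-\beta}$, so $\|\hat\theta-\theta_0\|_2/r_\gamma \to\infty$ in probability, hence for fixed $L$ the probability of coverage tends to $0$. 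The main obstacle I anticipate is bookkeeping the constants so that a single $\theta_0$ (and a single small $c$) works simultaneously for the contraction claim and, for \emph{every} fixed $L$, the coverage claim — this requires arranging the block so that $\|\hat\theta-\theta_0\|_2$ genuinely exceeds \emph{any} fixed multiple of $r_\gamma$ with probability tending to one, which is fine because the ratio blows up like $\sqrt m$, but one must be careful that the stochastic part of $\|\hat\theta - \theta_0\|_2$ does not accidentally make it small; controlling the lower deviation of this weighted chi-square-type quantity is the one genuinely technical lemma needed.
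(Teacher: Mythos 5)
Your proposal follows essentially the same route as the paper: write the aggregated ``posterior'' $\mathbf\Pi_I(\cdot\given\mathbf Y)$ explicitly as a product of Gaussians with mean $\hat\theta_i=w_iY_i$, $w_i=n/(n+\sigma^2mi^{1+2\beta})$, and spread $t_i^2=s_i^2/m$, pick a truth saturating the hyperrectangle near the local crossover frequency $N\asymp(n/(\sigma^2m))^{1/(1+2\beta)}$ so the squared bias is $\asymp(n/m)^{-2\beta/(1+2\beta)}$, and then conclude via triangle inequality plus Chebyshev that both the small ball around $\theta_0$ and the credible ball of radius $Lr_\gamma\asymp m^{-1/(2+4\beta)}n^{-\beta/(1+2\beta)}$ miss the bulk of the posterior; the paper's $\theta_0$ (the full envelope sequence $\theta_{0,i}^2\propto i^{-1-2\beta}$ rather than a block) is an immaterial variation, and your $r_\gamma$ computation agrees with the paper's.

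One quantitative slip: you assign the stochastic part of $\hat\theta$ the local noise level, writing $\sum_{i\asymp N}\Var_{\theta_0}\hat\theta_i\asymp N\sigma^2m/n\asymp N^{-2\beta}$, i.e.\ the same order as the squared bias. But $\hat\theta_i=w_i\bar Y_i$ with $\bar Y_i$ the average of the $m$ local observations, so $\Var_{\theta_0}\hat\theta_i=w_i^2\sigma^2/n$ and the variance term is $\asymp m^{-1/(1+2\beta)}n^{-2\beta/(1+2\beta)}$, a factor $m$ \emph{smaller} than the squared bias (this is the paper's computation). The slip does not damage your conclusions --- your $\EE_{\theta_0}\|\hat\theta-\theta_0\|_2^2\asymp N^{-2\beta}$ and the ratio $\|\hat\theta-\theta_0\|_2/r_\gamma\asymp\sqrt m$ survive --- but it does inflate the difficulty you attribute to the final step: with the correct variance the ``genuinely technical lemma'' on lower deviations of the noncentral chi-square is unnecessary, since $\|\hat\theta-\EE_{\theta_0}\hat\theta\|_2$ is of strictly smaller order than $\|\EE_{\theta_0}\hat\theta-\theta_0\|_2$ and the plain triangle inequality with a Chebyshev bound on the centered term (exactly as in the paper's proof) already gives the high-probability lower bound on $\|\hat\theta-\theta_0\|_2$ needed for both statements.
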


\begin{proof}
The proof of the theorem is given in  Section \ref{sec: naive_avg}.
\end{proof}

\bigskip

In the literature several less naive distributed strategies have been proposed. These 
methods either change the local likelihoods in a certain way, and/or the priors 
that are locally used, and/or the way that the local posteriors are aggregated. 
In the next few sections we investigate whether such strategies can improve
the bad asymptotic performance of the naive averaging method.

\subsection{Adjusted local  likelihoods and averaging}\label{sec: likelihood_approaches}

One perspective on the bad performance of the naive method is to say that since 
the ``sample size'' $n/m$ in the local problems is too small, the influence of the data on the 
local posterior is too small, resulting in a variance (and spread) that is too small 
relative to the squared bias. A possible way to remedy this that has been proposed in several papers is to 
 raise the local likelihoods to the power $m$, in order to mimic the situation that we 
have sample size $n$ in the local problems. This generalized Bayesian approach for the local 
problems has for instance been considered in the distributed context by  \cite{srivastava:2015a}. 
They combine it with a different aggregation method however, which we consider in Section 
\ref{sec: was}.
In this section we still consider the simple averaging scheme, in order
to isolate the effect of adjusting the local likelihoods. 

So in method II all local observers use the prior $\Pi(\cdot \given \alpha)$ again, 
with $\alpha = \beta$ equal to the regularity of the truth. They now each compute
a generalized local posterior $\tilde \Pi^j(\cdot \given \mathbf Y^j)$, defined by  
\[
d\tilde \Pi^j(\theta \given \mathbf Y^j) \propto \Big(p(\mathbf Y^j \given \theta)\Big)^m\,d\Pi(\theta \given \beta). 
\]
As before the  global ``posterior'' ${\mathbf\Pi_{II}(\cdot \given \mathbf Y)}$ is defined by postulating that a draw
from this global posterior is generated by first drawing once from each local generalized posterior
and then averaging these $m$ independent draws.

The following theorem states that this method indeed improves the naive approach of Section 
\ref{sec: naive_approaches}.
The  global posterior now contracts at the optimal rate for every $\beta$-regular truth. 
Unfortunately, the bad behaviour of the credible sets has not been remedied. For this 
approach the uncertainty quantification is in fact misleading 
for all $\beta$-regular truths.

\bigskip

\begin{theorem}[adjusted likelihoods + averaging]\label{thm: likelihood_average}
For all $\beta, M>0$ and  all sequences $M_n \to \infty$, 
\begin{align*}
\sup_{\theta_0 \in H^\beta(M)}\EE_{\theta_0}\mathbf\Pi_{{II}}(\theta:\, \|\theta-\theta_0\|_2\ge M_n n^{-\frac{\beta}{1+2\beta}}|\mathbf{Y} )\rightarrow 0 
\end{align*}
as $n, m \to \infty$. 
However, for all $\theta_0 \in H^\beta(M)$ and all $L>0$ it holds that  
\[
\PP_{\theta_0}\big(\theta_0\in \hat{C}(L) \big)\rightarrow 0.
\]
\end{theorem}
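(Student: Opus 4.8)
The plan is to use conjugacy to render everything in closed form. First I would observe that raising the local likelihood \eqref{eq: lik} to the power $m$ exactly cancels the $1/m$ in the local noise variance, so that $\big(p(\mathbf Y^j\given\theta)\big)^m\propto\prod_i e^{-\frac{n}{2\sigma^2}(Y^j_i-\theta_i)^2}$. Hence the generalized local posterior is the product $\tilde\Pi^j(\cdot\given\mathbf Y^j)=\bigotimes_i N(\kappa_i Y^j_i,\tau_i^2)$ with $\kappa_i=n/(n+\sigma^2 i^{1+2\beta})$ and $\tau_i^2=\sigma^2/(n+\sigma^2 i^{1+2\beta})$. Averaging $m$ independent draws, one from each $\tilde\Pi^j$, and using that $m^{-1}\sum_{j}Y^j_i=Y_i$ is precisely the aggregated observation \eqref{eq: agg}, I obtain $\mathbf\Pi_{II}(\cdot\given\mathbf Y)=\bigotimes_i N(\kappa_i Y_i,\tau_i^2/m)$. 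The structural point that drives the whole theorem is then visible: the global posterior mean $\hat\theta_i=\kappa_i Y_i$ is exactly the non-distributed posterior mean for prior regularity $\alpha=\beta$ (so it will inherit the optimal rate), but the global posterior spread $m^{-1}\sum_i\tau_i^2$ is a factor $m$ smaller than the non-distributed posterior spread $\sum_i\tau_i^2$.

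For the contraction statement I would carry out the standard bias–variance bookkeeping for the truncation-type estimator $\hat\theta$ over the hyperrectangle. Uniformly in $\theta_0\in H^\beta(M)$ one gets
\[
\EE_{\theta_0}\|\hat\theta-\theta_0\|_2^2=\sum_i(1-\kappa_i)^2\theta_{0,i}^2+\frac{\sigma^2}{n}\sum_i\kappa_i^2\lesssim (M^2\vee 1)\,n^{-2\beta/(1+2\beta)},
\]
and likewise $m^{-1}\sum_i\tau_i^2\lesssim n^{-2\beta/(1+2\beta)}$. Since the posterior second moment around $\theta_0$ equals $\|\hat\theta-\theta_0\|_2^2+m^{-1}\sum_i\tau_i^2$, Markov's inequality together with Fubini gives $\sup_{\theta_0\in H^\beta(M)}\EE_{\theta_0}\mathbf\Pi_{II}(\|\theta-\theta_0\|_2\ge M_n n^{-\beta/(1+2\beta)}\given\mathbf Y)\to 0$ for every $M_n\to\infty$, which is the first display.

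For the coverage statement the key is to compare the credible radius $r_\gamma$ with the true error $\|\hat\theta-\theta_0\|_2$. Because the global posterior is Gaussian centred at $\hat\theta$, the law of $\theta-\hat\theta$ under $\mathbf\Pi_{II}(\cdot\given\mathbf Y)$ is the (data-free) product $\bigotimes_i N(0,\tau_i^2/m)$; hence $r_\gamma$ is deterministic and equals the $(1-\gamma)$-quantile of $\|G\|_2$, where $\|G\|_2^2=m^{-1}\sum_i\tau_i^2\xi_i^2$ with $\xi_i$ i.i.d.\ standard normal. Comparing $\EE\|G\|_2^2=m^{-1}\sum_i\tau_i^2\asymp\sigma^2 m^{-1}n^{-2\beta/(1+2\beta)}$ with $\Var\|G\|_2^2=2m^{-2}\sum_i\tau_i^4$ (smaller than the squared mean by a factor $n^{-1/(1+2\beta)}$) shows $\|G\|_2^2$ concentrates, so $r_\gamma\asymp\sigma m^{-1/2}n^{-\beta/(1+2\beta)}$. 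On the other hand, writing $\hat\theta_i-\theta_{0,i}=-(1-\kappa_i)\theta_{0,i}+\kappa_i\sqrt{\sigma^2/n}\,\tilde Z_i$, one has $\EE_{\theta_0}\|\hat\theta-\theta_0\|_2^2\ge (\sigma^2/n)\sum_i\kappa_i^2\gtrsim n^{-2\beta/(1+2\beta)}$, and a Chebyshev bound — using that $\Var_{\theta_0}\|\hat\theta-\theta_0\|_2^2$ is of order $(1+M^2)n^{-(1+4\beta)/(1+2\beta)}$, again a factor $n^{-1/(1+2\beta)}$ below the squared mean — gives $\|\hat\theta-\theta_0\|_2^2\ge c\,n^{-2\beta/(1+2\beta)}$ with $\PP_{\theta_0}$-probability tending to $1$. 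Since $L r_\gamma\asymp L\sigma m^{-1/2}n^{-\beta/(1+2\beta)}$ is smaller by the factor $m^{-1/2}\to0$, it follows that $\PP_{\theta_0}(\theta_0\in\hat C(L))=\PP_{\theta_0}(\|\hat\theta-\theta_0\|_2\le L r_\gamma)\to 0$ for every fixed $L>0$.

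The real content is the closed-form/structural observation of the first paragraph: averaging the $m$ generalized posteriors keeps the full non-distributed estimation error but deflates the reported spread by a factor $\sqrt m$ in $\ell^2$. Once that mismatch is isolated, nothing deep remains — only routine Gaussian concentration and variance estimates for weighted chi-square sums. The one place requiring mild care is making the concentration of $r_\gamma$ and of $\|\hat\theta-\theta_0\|_2^2$ hold (uniformly, for the rate part), which works because the stochastic part of the error does not depend on $\theta_0$ and the bias contributions are controlled uniformly by $\theta_0\in H^\beta(M)$; this, rather than any genuine obstacle, is where I would spend the bookkeeping effort.
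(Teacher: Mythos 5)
Your proposal is correct and follows essentially the same route as the paper: identify the global ``posterior'' as the Gaussian product $\bigotimes_i N(\kappa_i Y_i,\tau_i^2/m)$, bound the squared bias by a multiple of $M^2 n^{-2\beta/(1+2\beta)}$, the variance of the mean by a multiple of $n^{-2\beta/(1+2\beta)}$, and the spread by a multiple of $m^{-1}n^{-2\beta/(1+2\beta)}$, and then conclude by comparing the estimation error (order $n^{-\beta/(1+2\beta)}$) with the credible radius (order $m^{-1/2}n^{-\beta/(1+2\beta)}$). The only minor divergences are that you use Markov's inequality on the posterior second moment for the contraction statement and a direct concentration bound on $\|\hat\theta-\theta_0\|_2^2$ (variance of order $(1+M^2)n^{-(1+4\beta)/(1+2\beta)}$, a factor $n^{-1/(1+2\beta)}$ below the squared mean) for the error lower bound, whereas the paper argues via a triangle-inequality event and via Anderson's inequality followed by a lower-tail Chebyshev bound for the centered part $\|\hat\theta-\EE_{\theta_0}\hat\theta\|_2$; both variants rest on exactly the same quantitative estimates, so this is a cosmetic rather than substantive difference.
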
 

\begin{proof}
The proof is given in Section \ref{sec: likelihood_average}.
\end{proof}

\bigskip

\subsection{Adjusted priors and averaging}\label{sec: prior_approaches}

Adjusting the likelihood as in the preceding section resulted in 
a correct trade-off between the bias and the variance of the global posterior mean, 
yielding an optimal posterior contraction rate. The spread of the posterior 
remained  too small in comparison however, resulting in credible sets  
with zero asymptotic coverage.
Instead of raising the local posteriors to the power $m$, as considered in the 
preceding section, we could alternatively raise the prior density to the 
power $1/m$. This has for instance been proposed in the context of the 
``Consensus Monte Carlo''  approach by \cite{scott:etal:2013}, in combination with 
simple averaging of the local posteriors. In this section we investigate the performance of this 
method in terms of posterior contraction and uncertainty quantification in our distributed 
signal-in-white-noise model.

The prior $\Pi(\cdot \given \alpha)$  that we use in the local problems is again a
product of centered Gaussians
with variance $i^{-1-2\alpha}$. Raising the corresponding densities to the power $1/m$ 
has the effect of multiplying  the $i$th prior variance by $m$. Hence, 
in our case raising the prior density to the power $1/m$ is the same as multiplicative 
rescaling, postulating that  $\theta$ is a-priori distributed according to $\Pi(\cdot \given \alpha, m)$, 
where 
\begin{align}
\label{eq: pi2} 
\Pi(\cdot |\a,\t)=\bigotimes_{i=1}^{\infty} N(0, \t i^{-1-2\a})
\end{align}
for $\a, \tau > 0$. Rescaled GPs have also been considered by \cite{shang:cheng:2015}, who have used them in the 
distributed setting to construct global credible sets from local ones.

Using rescaling we can actually obtain good results if the  prior regularity $\alpha$ is not 
exactly equal to the true regularity $\beta$. By using a scaling different from $\tau = m$ 
we can somehow compensate for the mismatch between $\alpha$ and $\beta$, at least in the range
$\beta \le 1+2\alpha$. In the non-distributed setting this is a well-known phenomenon, 
see for instance \cite{vaart:zanten:2007, knapik:etal:2011, szabo:etal:2013}.

The distributed procedure that we consider in this section then takes the following form. 
Every local observer uses the rescaled prior $\Pi(\cdot |\a,\t)$ defined by \eqref{eq: pi2}, with 
$\alpha > 0$ and 
\[
\tau = m n^{\frac{2(\alpha - \beta)}{1+2\beta}},
\]
where $\beta$ is the regularity of the truth. Next the (normal, unadjusted) corresponding posteriors are 
computed and they are averaged into a global ``posterior'' $\mathbf\Pi_{III}(\cdot\given \mathbf Y)$
as in the preceding sections. (Note that if in the local problems the prior regularity $\alpha = \beta$
is used, then $\tau = m$, so the method corresponds to raising the prior density to the power $1/m$.)

The following theorem gives the posterior contraction and coverage results 
for this method. 

\bigskip

\begin{theorem}[adjusted priors + averaging]\label{thm: prior_average}
Suppose  $\beta, M>0$ and  $\beta \le 1 + 2\alpha$. Then for all sequences $M_n \to \infty$, 
\begin{align*}
\sup_{\theta_0 \in H^\beta(M)} \EE_{\theta_0}\mathbf\Pi_{III}(\theta:\, \|\theta-\theta_0\|_2 > 
M_n n^{-\frac{\beta}{1+2\beta}}|\mathbf{Y} )\rightarrow 0
\end{align*}
as $n \to \infty$. Moreover, for all $\gamma \in (0,1)$   it holds that
\begin{align*}
\sup_{\theta_0 \in H^\beta(M) }\PP_{\theta_0}\Big(\theta_0 \not\in \hat{C}(L) \Big)\le \gamma
\end{align*}
for large enough $L>0$.
\end{theorem}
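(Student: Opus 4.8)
The plan is to exploit the conjugacy of the Gaussian prior \eqref{eq: pi2} with the Gaussian likelihood \eqref{eq: lik}, which makes every local posterior an explicit product of Gaussians, and then to track the three key quantities—squared bias of the global posterior mean, variance of the global posterior mean, and posterior spread—coordinate by coordinate. Concretely, with local sample size $n/m$ and prior variance $\tau i^{-1-2\alpha}$ in coordinate $i$, the $j$th local posterior in coordinate $i$ is $N(\mu^j_i, s_i^2)$ where the posterior mean $\mu^j_i$ is a shrinkage of the local data $Y^j_i$ and $s_i^2 = \big(\tfrac{n}{\sigma^2 m} + \tfrac{1}{\tau} i^{1+2\alpha}\big)^{-1}$. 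Averaging $m$ independent draws, one from each local posterior, the global posterior $\mathbf\Pi_{III}(\cdot\given\mathbf Y)$ is Gaussian in each coordinate with mean $\hat\theta_i = m^{-1}\sum_j \mu^j_i$ and variance $s_i^2/m$. First I would write $\hat\theta_i$ in closed form and decompose $\EE_{\theta_0}\|\hat\theta-\theta_0\|_2^2 = \sum_i \mathrm{bias}_i^2 + \sum_i \Var(\hat\theta_i)$, and separately compute the expected posterior spread $\EE_{\theta_0}\mathbf\Pi_{III}(\|\theta-\hat\theta\|_2^2\given\mathbf Y) = \sum_i s_i^2/m$.

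Next I would plug in $\tau = m\, n^{2(\alpha-\beta)/(1+2\beta)}$ and carry out the standard nonparametric bias–variance bookkeeping, splitting the index range at the effective cutoff $i^\star \asymp (n\tau)^{1/(1+2\alpha)} \asymp n^{1/(1+2\beta)}$. For $\theta_0\in H^\beta(M)$, i.e.\ $\theta_{0,i}^2 \le M^2 i^{-1-2\beta}$, the squared-bias sum is controlled using the restriction $\beta \le 1+2\alpha$ exactly as in the non-distributed rescaled-prior analysis (cf.\ \cite{knapik:etal:2011, szabo:etal:2013}); the variance sum $\sum_i \Var(\hat\theta_i)$ and the spread sum $\sum_i s_i^2/m$ should both come out at the order $n^{-2\beta/(1+2\beta)}$. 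The point is that rescaling by $\tau$ has inflated each $s_i^2$ by the right amount so that, after dividing by $m$, the global spread matches the squared bias and the variance—this is what was missing in Theorems~\ref{thm: naive_avg} and~\ref{thm: likelihood_average}. From the contraction rate statement I would then conclude via Markov's inequality applied to the squared $\ell^2$-distance.

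For the coverage claim I would follow the now-standard argument for $\ell^2$ credible balls in the conjugate Gaussian model. Since the global posterior is a Gaussian product, $\|\theta-\hat\theta\|_2^2$ under the posterior concentrates around its mean $\sum_i s_i^2/m$ with fluctuations of smaller order (the variance of this quadratic form is $2\sum_i s_i^4/m^2$, which is $o\big((\sum_i s_i^2/m)^2\big)$ by a square-summability argument), so the credible radius satisfies $r_\gamma^2 \asymp \sum_i s_i^2/m$ up to constants depending on $\gamma$. Then $\theta_0\in\hat C(L)$ fails only if $\|\hat\theta-\theta_0\|_2 > L r_\gamma$, and since $\EE_{\theta_0}\|\hat\theta-\theta_0\|_2^2$ is of the same order $n^{-2\beta/(1+2\beta)}$ as $r_\gamma^2$ uniformly over $H^\beta(M)$, choosing $L$ large and applying Markov's inequality gives $\PP_{\theta_0}(\theta_0\notin\hat C(L))\le\gamma$ uniformly. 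The main obstacle I anticipate is the uniformity over $H^\beta(M)$ together with the need for sharp (not merely rate-level) two-sided control of the three sums: one must show the squared bias is not only $O(n^{-2\beta/(1+2\beta)})$ but that the variance/spread do not dominate it in a way that would make $r_\gamma$ too small, so getting matching upper and lower constants on $r_\gamma^2$ and on $\sup_{\theta_0}\EE_{\theta_0}\|\hat\theta-\theta_0\|_2^2$—and verifying that the $\beta\le 1+2\alpha$ condition is exactly what makes the bias term behave—will be the delicate part.
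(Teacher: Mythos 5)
Your proposal is correct and follows essentially the same route as the paper: explicit Gaussian conjugacy giving coordinatewise posterior means and variances, balancing squared bias, variance of the global mean and spread via the choice of $\tau$ (all of order $n^{-2\beta/(1+2\beta)}$ under $\beta\le 1+2\alpha$), Markov/Chebyshev for contraction, and a concentration argument for $\sum_i t_i^2 Z_i^2$ to lower-bound $r_\gamma$ followed by Markov for coverage. One small slip: since the local noise level is $\sigma^2 m/n$, the effective cutoff is $i^\star \asymp (n\tau/m)^{1/(1+2\alpha)}$ rather than $(n\tau)^{1/(1+2\alpha)}$, which with $\tau = m\,n^{2(\alpha-\beta)/(1+2\beta)}$ indeed gives the order $n^{1/(1+2\beta)}$ you state; also note that for coverage only a lower bound on $r_\gamma$ is needed, so the two-sided constant-level control you worry about is not required.
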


\begin{proof}
See  Section \ref{sec: prior_average}.
\end{proof}
 
\bigskip 
 
So adjusting the prior in this way actually works better 
than adjusting the likelihood. Not only do we get optimal contraction rates, 
but the  credible sets that this method produces have asymptotic coverage
too. The proof shows that the credible sets have optimal radius of the order $n^{-\beta/(1+2\beta)}$
as well.

\subsection{Adjusted local likelihoods and Wasserstein barycenters}
\label{sec: was}

In Section \ref{sec: likelihood_approaches} we saw that raising the local likelihoods to the power $m$ and then 
averaging the corresponding generalized posteriors yields optimal contraction 
rates, but can produce badly performing credible sets. In this section we study
 the approach considered by \cite{minsker:2014,srivastava:2015a}  in the context
 of their ``WASP'' method, which consists in aggregating the local posteriors not by 
 simple averaging, but by  computing their Wasserstein barycenter.  

The generalized local posteriors $\tilde \Pi^j(\cdot \given \mathbf Y^j)$, as defined in Section 
\ref{sec: likelihood_approaches}, are (Gaussian) measures on $\ell^2$. The $2$-Wasserstein distance $W_2(\mu, \nu)$
between two probability measures $\mu$ and $\nu$ on $\ell^2$ is defined by 
\[
W^2_2(\mu, \nu) = \inf_\gamma \int\!\!\int \|x-y\|_2^2\,\gamma(dx, dy),
\]
where the infimum is over all measures $\gamma$ on $\ell^2 \times \ell^2$ with marginals $\mu$ and $\nu$. 
The corresponding 2-Wasserstein barycenter of $m$ probability measures $\mu_1, \ldots, \mu_m$ on $\ell^2$ is then 
defined by 
\[
\bar \mu = \argmin_\mu \frac1m\sum_{j=1}^m W^2_2(\mu, \mu_j), 
\]
where the minimum is over all probability measures on $\ell^2$ with finite second moments. There exist 
effective algorithms to compute Wasserstein barycenters in many cases, 
see for instance \cite{cuturi14} and the references therein.

Having this notion at our disposal the distributed method we consider in this section 
proceeds as follows. In every local problem the prior $\Pi(\cdot\given \alpha)$ 
is used, with $\alpha=\beta$ equal to the regularity of the truth. 
Next, the corresponding generalized posteriors 
$\tilde \Pi^j(\cdot \given \mathbf Y^j)$ are computed locally, which involves 
raising the likelihood to the power $m$
as described in Section \ref{sec: likelihood_approaches}.
Finally, the global ``posterior'' $\mathbf \Pi_{IV}(\cdot\given \mathbf Y)$
is constructed as the 2-Wasserstein barycenter
of the local measures $\tilde \Pi^1(\cdot \given \mathbf Y^1), 
\ldots, \tilde \Pi^m(\cdot \given \mathbf Y^m)$.

The following theorem asserts that this method results in optimal 
posterior contraction rates and correct quantification of uncertainty.

\bigskip

\begin{theorem}[adjusted likelihoods + barycenters]\label{thm: likelihood_Wasserstein}
For all  $\beta, M>0$ and   all sequences $M_n \to \infty$, 
\begin{align*}
\sup_{\theta_0 \in H^\beta(M)} \EE_{\theta_0}\mathbf\Pi_{IV}(\theta:\, \|\theta-\theta_0\|_2 > 
M_n n^{-\frac{\beta}{1+2\beta}}|\mathbf{Y} )\rightarrow 0
\end{align*}
as $n \to \infty$. Moreover, for all $\gamma \in (0,1)$   it holds that
\begin{align*}
\sup_{\theta_0 \in H^\beta(M) }\PP_{\theta_0}\Big(\theta_0 \not\in \hat{C}(L) \Big)\le \gamma
\end{align*}
for large enough $L>0$.
\end{theorem}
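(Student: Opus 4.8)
The plan is to exploit the Gaussian conjugacy of the local problems so that every generalized local posterior $\tilde\Pi^j(\cdot\given\mathbf Y^j)$ is an explicit product of one-dimensional Gaussians, and then to use the fact that the $2$-Wasserstein barycenter of Gaussian measures with a common covariance structure is again Gaussian with that covariance and mean equal to the average of the means. Concretely, raising the $j$th likelihood \eqref{eq: lik} to the power $m$ replaces the local precision $n/(\sigma^2 m)$ by $n/\sigma^2$, so coordinate $i$ of $\tilde\Pi^j$ is $N(\hat\theta^j_i, s_i^2)$ with posterior mean $\hat\theta^j_i = (n/\sigma^2)(n/\sigma^2 + i^{1+2\beta})^{-1} Y^j_i$ and posterior variance $s_i^2 = (n/\sigma^2 + i^{1+2\beta})^{-1}$ that does \emph{not} depend on $j$. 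Because all $m$ local posteriors share the diagonal covariance $\operatorname{diag}(s_i^2)$, the barycenter $\mathbf\Pi_{IV}(\cdot\given\mathbf Y)$ is the Gaussian product measure $\bigotimes_i N(\bar\theta_i, s_i^2)$ with $\bar\theta_i = m^{-1}\sum_{j=1}^m \hat\theta^j_i = (n/\sigma^2)(n/\sigma^2+i^{1+2\beta})^{-1}\bar Y_i$, where $\bar Y_i = m^{-1}\sum_j Y^j_i$ is exactly the aggregated data \eqref{eq: agg}. This is the crucial structural observation: taking the barycenter restores the full posterior spread (the variances $s_i^2$ are those of the \emph{non}-distributed problem with precision $n$) while the mean is the non-distributed posterior mean. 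I would first verify the barycenter identity for diagonal Gaussians on $\ell^2$ rigorously — reducing to the one-dimensional case coordinatewise, or invoking the known closed form for Wasserstein barycenters of Gaussians (e.g.\ \cite{cuturi14} and references therein) in the commuting-covariance case — and confirm that the infimum defining $\bar\mu$ is attained within the Gaussian class.

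Once this reduction is in place, the contraction statement follows from a standard bias--variance decomposition for the posterior spread. The expected posterior second moment around $\theta_0$ is $\sum_i \big( s_i^2 + (\bar\theta_i - \theta_{0,i})^2 + \Var_{\theta_0}\bar\theta_i\big)$; with $\theta_0\in H^\beta(M)$ and the choice $\alpha=\beta$, the squared-bias term $\sum_i (1+i^{1+2\beta}\sigma^2/n)^{-2} i^{1+2\beta}\theta_{0,i}^2\cdot(\text{factors})$, the variance-of-mean term $\sum_i s_i^2\cdot(\text{factors})$, and the spread term $\sum_i s_i^2$ are each $O(n^{-2\beta/(1+2\beta)})$ by the usual sum estimates (splitting at $i\asymp n^{1/(1+2\beta)}$). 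A Markov-inequality argument then gives the uniform contraction at rate $n^{-\beta/(1+2\beta)}$, with a blow-up sequence $M_n\to\infty$. This part is essentially the classical \cite{knapik:etal:2011} computation, now legitimately applicable because the barycenter has the non-distributed covariance; I expect it to be routine, modulo bookkeeping of constants, and it parallels the proof of Theorem~\ref{thm: likelihood_average} with the key improvement that $s_i^2$ rather than $m^{-1}s_i^2$ appears.

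For the coverage statement I would follow the template used for Theorem~\ref{thm: prior_average}: the credible radius $r_\gamma$ is determined by $\mathbf\Pi_{IV}(\|\theta-\hat\theta\|_2\le r_\gamma\given\mathbf Y)=1-\gamma$ where $\hat\theta_i=\bar\theta_i$, so $\|\theta-\hat\theta\|_2^2$ under the barycenter is $\sum_i s_i^2 V_i$ with independent $V_i\sim\chi^2_1$. One shows $\sum_i s_i^2 \asymp n^{-2\beta/(1+2\beta)}$ and that the fluctuations of $\sum_i s_i^2 V_i$ around its mean are of smaller order (a fourth-moment or Lyapunov bound on $\sum_i s_i^4$ suffices), so $r_\gamma^2 \asymp n^{-2\beta/(1+2\beta)}$ with high $\mathbf\Pi_{IV}$-probability; then $\PP_{\theta_0}(\|\bar\theta-\theta_0\|_2 > L r_\gamma)\le \PP_{\theta_0}(\|\bar\theta-\theta_0\|_2 > L'n^{-\beta/(1+2\beta)})$, which is $\le\gamma$ for $L,L'$ large by a Chebyshev bound on $\|\bar\theta-\theta_0\|_2^2 = \sum_i(\text{bias}_i^2 + \text{var}_i)$, using again $\theta_0\in H^\beta(M)$. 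The main obstacle I anticipate is the first step — justifying carefully that the Wasserstein barycenter over \emph{all} probability measures on $\ell^2$ with finite second moment (not just Gaussians) coincides with the coordinatewise-Gaussian candidate, including existence/uniqueness of optimal couplings in the infinite-dimensional $\ell^2$ setting; after that, everything is a variant of the already-proven theorems and should go through with minor modifications.
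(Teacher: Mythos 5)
Your proposal is correct and follows essentially the same route as the paper: identify the barycenter of the $m$ generalized local posteriors (which share the covariance $R=\diag(s_i^2)$) as the Gaussian with averaged mean and covariance $R$ — the paper does this by citing the Gaussianity of Wasserstein barycenters of Gaussians together with Gelbrich's explicit formula for $W_2$ between Gaussians on $\ell^2$ — and then run the standard squared-bias/variance/spread computation at rate $n^{-2\beta/(1+2\beta)}$ with Chebyshev arguments for both contraction and the lower bound on $r_\gamma$ giving coverage. The only cosmetic slip is calling $r_\gamma^2\asymp n^{-2\beta/(1+2\beta)}$ a high-$\mathbf\Pi_{IV}$-probability statement, whereas $r_\gamma$ is a deterministic quantile here; the concentration of $\sum_i s_i^2 V_i$ around its mean yields the deterministic bound directly.
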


\begin{proof}
See Section \ref{sec: likelihood_Wasserstein}.
\end{proof}

\bigskip

\subsection{Product of Gaussian process experts}

The proofs of the theorems presented so far show that since in our 
context the global ``posterior'' is always a Gaussian measure, 
the behaviour of the procedure can be understood by analyzing  
three central quantities: the bias of the posterior mean, the variance 
of the posterior mean, and the spread of the posterior. Depending
on how these quantities are related we have found different behaviours:
sub-optimal posterior contraction and bad coverage of credible sets 
(Section \ref{sec: naive_approaches}), optimal posterior contraction but 
bad coverage of credible sets (Section \ref{sec: likelihood_approaches}), 
and optimal posterior contraction 
and also good coverage of credible sets (Sections \ref{sec: prior_approaches} 
and \ref{sec: was}).

In principle it is now straightforward to analyze different methods as well, 
provided the three central quantities can be controlled. As an illustration 
we consider in this section the single-layer version of the 
product-of-Gaussian-process-expert (PoE) model, introduced in \cite{ng:2014} and a generalization proposed  in \cite{cao:2014}. 
An interesting fact is that we will encounter
a combination of behaviours that we have not seen yet: sub-optimal 
contraction rates, but good coverage of credible sets. 
 These methods were introduced to deal with the distributed non-parametric regression model, but 
for the sake of comparison we analyze them in the context of our distributed signal-in-white-noise model, which can be thought of as an idealized version of the regression model.

The idea of the basic version of the Gaussian PoE model is to employ a Gaussian prior in 
every local machine, compute the corresponding posterior densities
and approximate the global posterior density by multiplying  and normalizing
these. In our infinite-dimensional setting this does not make sense strictly 
speaking, since we can not express priors and posteriors on $\ell^2$
in terms of densities with respect to some generic dominating measure. 
We could remedy this by considering a truncated version of our distributed 
model, where we assume we only observe the first $n$ noisy coefficients $Y^j_i$ in 
every machine, say, and focus on making inference about the first $n$ true 
coefficients $\theta_i$. This would make the setting finite-dimensional, allowing 
us to write prior and posterior densities with respect to Lebesgue measure. 
Alternatively, we can stay in the infinite-dimensional setting of the paper 
and just reason formally and still arrive at a well-defined 
global PoE ``posterior''. This is the approach we follow here.

Indeed, say that as before we use the prior $\Pi(\cdot \given \alpha)$ given by \eqref{def: prior}
in every  local machine, with $\alpha=\beta$ equal
to the  regularity of the true signal. This prior has formal ``density'' proportional to
\[
\theta \mapsto \prod  e^{-\tfrac12 \frac{\theta_i^2}{i^{-1-2\beta}}}.
\]
By completing the square we see that the product of this expression with the local likelihood given by \eqref{eq: lik}  is,
still formally, proportional to 
\[
\theta \mapsto \prod  e^{-\tfrac12 \frac{\theta_i^2}{i^{-1-2\beta}}} e^{-\frac12 \frac{n(Y^j_i-\theta_i)^2}{\sigma^2 m}} \propto \prod 
e^{-\frac12 \theta_i^2(i^{1+2\beta} + \frac{n}{m\sigma^2}) 
+\theta_i \frac{nY^j_i}{m\sigma^2}}.
\]
Taking the product over $j$ we then obtain the formal density of the 
PoE posterior, which is proportional to 
\[
\theta \mapsto \prod 
e^{-\frac12 \theta_i^2(mi^{1+2\beta} + \frac{n}{\sigma^2}) 
+\theta_i \frac{n\sum_{j =1}^mY^j_i}{m\sigma^2}}.
\]
Now this last expression is, up to a constant, the density of 
a product of Gaussians with means $\hat\theta_i$ and variances $t^2_i$ 
given by 
\[
\hat\theta_i = \frac{nm^{-1}\sum Y^j_i}{n + \sigma^2mi^{1+2\beta}}, 
\qquad t^2_i = \frac{\sigma^2}{n + \sigma^2mi^{1+2\beta}} .
\]
The latter is in fact a well-defined Gaussian measure on $\ell^2$, 
so we can now simply define the global PoE ``posterior'' 
$\mathbf \Pi_{V}(\cdot\given \mathbf Y)$ as the latter measure. 

We see that  the expressions 
for the global mean and spread are in fact the same as what we found 
 in Section \ref{sec: naive_avg} for the naive averaging method. 
 As a consequence, the negative result of Theorem \ref{thm: naive_avg}
 holds for the basic version of the Gaussian PoE model as well.
 
 \bigskip
 
\begin{theorem}[product of Gaussian experts]\label{thm: poe}
For every $\beta, M>0$ there exists a $\theta_0\in H^{\beta}(M)$ such that for  small enough  $c>0$,
\begin{align*}
\EE_{\theta_0}\mathbf\Pi_{V}(\theta:\, \|\theta-\theta_0\|_2\leq cm^{\frac{\beta}{1+2\beta}}n^{-\frac{\beta}{1+2\beta}}| \mathbf{Y} )\rightarrow 0
\end{align*}
as $m \to \infty$ and $n/m \to \infty$. 
Furthermore, for all $L>0$ it holds that 
\[
\PP_{\theta_0}\big(\theta_0\in \hat{C}(L) \big)\rightarrow 0.
\]
\end{theorem}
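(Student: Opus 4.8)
The plan is to reduce the statement entirely to Theorem~\ref{thm: naive_avg}, by observing that the global PoE ``posterior'' $\mathbf\Pi_{V}(\cdot\given\mathbf Y)$ is literally the same measure as the naive average posterior $\mathbf\Pi_{I}(\cdot\given\mathbf Y)$ of Section~\ref{sec: naive_approaches}. To see this I would first record the conjugacy computation for the naive method: with the prior $\bigotimes_i N(0,i^{-1-2\beta})$ and the likelihood \eqref{eq: lik}, local observer $j$ has local posterior $\bigotimes_i N(\mu^j_i,s_i^2)$ with $\mu^j_i=nY^j_i/(n+\sigma^2 m i^{1+2\beta})$ and $s_i^2=\sigma^2 m/(n+\sigma^2 m i^{1+2\beta})$. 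A draw from $\mathbf\Pi_{I}$ is $m^{-1}\sum_{j=1}^m\theta^j$ with the $\theta^j$ drawn independently from these local posteriors, so its $i$th coordinate is distributed as $N\big(m^{-1}\sum_{j}\mu^j_i,\ m^{-2}\sum_{j=1}^m s_i^2\big)=N(\hat\theta_i,\ s_i^2/m)=N(\hat\theta_i,t_i^2)$, with $\hat\theta_i$ and $t_i^2$ exactly as in the display defining $\mathbf\Pi_{V}$ (and the coordinates remain independent, being averages of independent coordinatewise products). Hence $\mathbf\Pi_{I}=\mathbf\Pi_{V}$, and both assertions of the theorem, for the same ``difficult'' truth $\theta_0\in H^{\beta}(M)$, are inherited verbatim from Theorem~\ref{thm: naive_avg}.

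For the reader's benefit I would also recall the mechanism behind Theorem~\ref{thm: naive_avg}, since it explains the conclusion. Because $\mathbf\Pi_{V}$ is a product of Gaussians with mean $\hat\theta$ and coordinatewise variances $t_i^2$, everything is governed by three quantities: the squared bias $B^2(\theta_0)=\sum_i\theta_{0,i}^2\big(\sigma^2 m i^{1+2\beta}/(n+\sigma^2 m i^{1+2\beta})\big)^2$, the variance of the posterior mean $V=\sum_i n\sigma^2/(n+\sigma^2 m i^{1+2\beta})^2$, and the posterior spread $S=\sum_i t_i^2$. Splitting the sums at the critical frequency $N\asymp(n/m)^{1/(1+2\beta)}$ shows $V\asymp S\asymp m^{-1/(1+2\beta)}n^{-2\beta/(1+2\beta)}$, whereas a truth with $\theta_{0,i}^2\asymp i^{-1-2\beta}$ for $i\in[N,2N]$ and $\theta_{0,i}=0$ otherwise --- which belongs to $H^{\beta}(M)$ once the constant is at most $M^2$ --- keeps the bias factors bounded away from $0$, so $B^2(\theta_0)\asymp m^{2\beta/(1+2\beta)}n^{-2\beta/(1+2\beta)}$, which exceeds $S$ by the factor $m\to\infty$. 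Consequently $\|\hat\theta-\theta_0\|_2\asymp m^{\beta/(1+2\beta)}n^{-\beta/(1+2\beta)}$ with $\PP_{\theta_0}$-probability tending to $1$ (the fluctuation of $\|\hat\theta-\theta_0\|_2^2$ about its mean $B^2(\theta_0)+V$ is of smaller order, by Chebyshev applied to a sum of independent weighted $\chi^2_1$ variables with summable coefficients). Taking $\e=c\,m^{\beta/(1+2\beta)}n^{-\beta/(1+2\beta)}$ with $c$ small, the triangle inequality together with the Markov bound $\mathbf\Pi_{V}(\|\theta-\hat\theta\|_2>\e')\le S/(\e')^2$ yields the contraction failure; and since the (deterministic) credible radius satisfies $r_\gamma^2\asymp S\ll\|\hat\theta-\theta_0\|_2^2$, for every fixed $L$ the ball $\hat{C}(L)$ misses $\theta_0$ with probability tending to $1$, which is the coverage statement.

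I do not expect a real obstacle. All of the genuine work --- constructing and checking the difficult signal $\theta_0$, and the concentration of the three quadratic functionals --- is exactly the content of Theorem~\ref{thm: naive_avg}, which I may assume; the only thing left to verify is the one-line identity $\mathbf\Pi_{V}=\mathbf\Pi_{I}$, i.e.\ the conjugacy computation above. If one instead wanted a self-contained proof not quoting Theorem~\ref{thm: naive_avg}, the single non-routine ingredient would be the concentration of $\|\hat\theta-\theta_0\|_2^2$ around $B^2(\theta_0)+V$, which is still elementary (Chebyshev for a weighted sum of independent $\chi^2_1$'s with $\ell^1$ coefficients).
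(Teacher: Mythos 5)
Your proposal is correct and is essentially the paper's own argument: the paper likewise observes that $\mathbf\Pi_{V}(\cdot\given\mathbf Y)$ has exactly the same Gaussian product form (same mean $\hat\theta$ and same coordinatewise variances $t_i^2=\sigma^2/(n+\sigma^2 m i^{1+2\beta})$) as the naive averaging posterior $\mathbf\Pi_{I}(\cdot\given\mathbf Y)$, and then inherits both conclusions directly from Theorem~\ref{thm: naive_avg}. Your recap of the bias/variance/spread mechanism matches the proof of that theorem (with an equally valid choice of difficult signal), so nothing further is needed.
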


\bigskip

 One can generalize the PoE model by raising the local posterior 
 densities to some power before multiplying and normalizing them,
 as proposed in \cite{cao:2014}. 
 In the subsequent  analysis we consider the choice 
 to $1/m$ for the power, as suggested in \cite{deisenroth:2015}.
 Adapting the preceding analysis for the ordinary PoE model we see that 
for this generalized  PoE model the global ``posterior'' 
$\mathbf \Pi_{VI}(\cdot\given \mathbf Y)$ is in our setting 
again a product of Gaussian, but now with means and variances given by 
\[
\hat\theta_i = \frac{nm^{-1}\sum Y^j_i}{n + \sigma^2mi^{1+2\beta}}, 
\qquad t^2_i = \frac{\sigma^2m}{n + \sigma^2mi^{1+2\beta}} .
\]
So the global posterior mean is unaltered compared to the basic PoE model, 
but the global posterior spread 
has been blown up by a factor $m$. As a result, there still 
exists the same class of truths as in Section 
\ref{sec: naive_avg}  for which the squared bias and the variance 
of the posterior mean will be incorrectly balanced, resulting 
in a sub-optimal rate of posterior contraction. However, 
the larger posterior spread ensures that we do have asymptotic 
coverage of credible sets. It should be noted however that 
 these sets have a
diameter that is sub-optimal, i.e.\ they are too conservative. 

\bigskip

\begin{theorem}[generalized product of Gaussian experts]\label{thm: gpoe}
For every $\beta, M>0$ there exists a $\theta_0\in H^{\beta}(M)$ such that for  small enough  $c>0$,
\begin{align*}
\EE_{\theta_0}\mathbf\Pi_{VI}(\theta:\, \|\theta-\theta_0\|_2\leq cm^{\frac{\beta}{1+2\beta}}n^{-\frac{\beta}{1+2\beta}}|\mathbf{Y} )\rightarrow 0
\end{align*}
as $m \to \infty$ and $n/m \to \infty$. 
However, for all $\gamma \in (0,1)$   it holds that
\begin{align*}
\sup_{\theta_0 \in H^\beta(M) }\PP_{\theta_0}\Big(\theta_0 \not\in \hat{C}(L) \Big)\le \gamma
\end{align*}
for large enough $L>0$.
\end{theorem}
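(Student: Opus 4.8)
The plan is to exploit the fact that the global PoE$_{VI}$ ``posterior'' $\mathbf\Pi_{VI}(\cdot\given\mathbf Y)$ is an explicit product of Gaussians with mean $\hat\theta = (\hat\theta_i)$ and coordinatewise variances $t_i^2 = \sigma^2 m/(n+\sigma^2 m i^{1+2\beta})$, so that everything reduces to controlling three scalar quantities: the squared bias $B^2 = \|\EE_{\theta_0}\hat\theta - \theta_0\|_2^2$, the variance $V = \EE_{\theta_0}\|\hat\theta - \EE_{\theta_0}\hat\theta\|_2^2$ of the posterior mean, and the posterior spread $S^2 = \sum_i t_i^2$. Note that $\hat\theta_i$ is identical to the naive-averaging/basic-PoE mean analysed in Section~\ref{sec: naive_avg}, so I can reuse those computations verbatim: for the specific worst-case truth $\theta_0$ constructed there (a signal that places its mass near the frequency $i \sim (n/m)^{1/(1+2\beta)}$ where the bias of the over-smoothed estimator is maximal), one has $B^2 \asymp (m/n)^{2\beta/(1+2\beta)}$ up to constants, which dominates both $V$ and the optimal squared rate $n^{-2\beta/(1+2\beta)}$ once $m\to\infty$. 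The first negative assertion then follows exactly as in the proof of Theorem~\ref{thm: naive_avg}: since $\|\hat\theta - \theta_0\|_2 \geq B - \|\hat\theta - \EE_{\theta_0}\hat\theta\|_2$, and the posterior is concentrated within spread $O(S)$ of $\hat\theta$ with $S \ll B$, a Markov/Chebyshev argument shows the posterior mass on the ball of radius $c m^{\beta/(1+2\beta)} n^{-\beta/(1+2\beta)}$ tends to $0$ in $\PP_{\theta_0}$-probability, hence in expectation.

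For the coverage statement I would first compute the radius $r_\gamma$ of the $(1-\gamma)$ credible ball around $\hat\theta$. Since $\|\theta - \hat\theta\|_2^2 = \sum_i t_i^2 \xi_i^2$ with $\xi_i$ i.i.d.\ standard normal under $\mathbf\Pi_{VI}$, and $\sum t_i^2 \asymp S^2$ with $\sum t_i^4 = o(S^4)$ (standard since the $t_i^2$ decay), the squared norm concentrates around $S^2$, so $r_\gamma \asymp S$. The key gain over the basic PoE model is the extra factor $m$ in the variances: here $t_i^2 = \sigma^2 m/(n+\sigma^2 m i^{1+2\beta})$, giving $S^2 \asymp m \cdot (n/m)^{-(2\beta)/(1+2\beta)} \cdot (1/?)$ — more precisely $S^2 = \sum_i \sigma^2 m/(n+\sigma^2 m i^{1+2\beta}) \asymp m (n/m)^{1/(1+2\beta)} \cdot (n/m)^{-1} \cdot \text{(const)} \asymp (m/n) (n/m)^{1/(1+2\beta)} = (m/n)^{2\beta/(1+2\beta)}$, which is precisely of the order of the bias $B$. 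Hence $r_\gamma$ and $B$ are of the same (sub-optimal) order $m^{\beta/(1+2\beta)} n^{-\beta/(1+2\beta)}$. To get $\PP_{\theta_0}(\theta_0 \in \hat C(L)) \geq 1-\gamma$ for $L$ large, I would write $\|\theta_0 - \hat\theta\|_2 \leq B + \|\hat\theta - \EE_{\theta_0}\hat\theta\|_2$ and bound the second term: its expected square is $V \lesssim B^2$ (in fact $V$ is of the same order or smaller), so by Chebyshev it is $O_{\PP}(B)$. Since $B \lesssim r_\gamma$ with a fixed constant ratio, choosing $L$ large enough that $L r_\gamma$ exceeds $B$ plus a large multiple of $\sqrt V$ with probability $\geq 1-\gamma$ gives the coverage bound uniformly over $\theta_0 \in H^\beta(M)$ — note $B^2 \leq C(\beta,M)(m/n)^{2\beta/(1+2\beta)}$ uniformly on the hyperrectangle, which is what makes the supremum statement work even though the first assertion only needs one bad $\theta_0$.

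The main obstacle I anticipate is making the two orders match cleanly: one has to verify that $r_\gamma$ (driven by the posterior spread $S$, which is the \emph{worst-case-free} quantity $\sum t_i^2$) is \emph{at least} a constant multiple of $\sup_{\theta_0 \in H^\beta(M)} B(\theta_0)$, so that the credible set is large enough to cover \emph{every} $\beta$-regular truth, while simultaneously $S$ must not be so large that the first (negative, single-$\theta_0$) assertion fails — but that assertion only concerns contraction and is insensitive to inflating $S$, so there is no real tension, only bookkeeping. A secondary technical point is the uniformity in $\theta_0$ for the variance term $V = \EE_{\theta_0}\|\hat\theta - \EE_{\theta_0}\hat\theta\|_2^2 = \sum_i (\sigma^2/n)(n/(n+\sigma^2 m i^{1+2\beta}))^2$, which is in fact a deterministic quantity not depending on $\theta_0$ at all, so uniformity is automatic there; the only $\theta_0$-dependence is in $B$, handled by the hyperrectangle bound above. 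Given these observations, the argument is essentially a recombination of the estimates already established for Theorems~\ref{thm: naive_avg} and~\ref{thm: prior_average}, with the spread recomputed for the inflated variances $t_i^2$.
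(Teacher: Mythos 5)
Your coverage argument is essentially the paper's: lower-bound the credible radius $r_\gamma$ by the spread via concentration of $\sum_i t_i^2\xi_i^2$ (using $\sum_i t_i^4=o\bigl((\sum_i t_i^2)^2\bigr)$), bound $\EE_{\theta_0}\|\hat\theta-\theta_0\|_2^2$ uniformly over $H^\beta(M)$ by the hyperrectangle bias bound plus the ($\theta_0$-free) variance, and take $L$ large; that half is sound. The genuine gap is in the first statement. You assert that the posterior spread satisfies $S\ll B$ and then invoke the Chebyshev step from the proof of Theorem \ref{thm: naive_avg}. But for method VI the spread has been inflated by exactly the factor $m$, so $S^2=\sum_i \sigma^2 m/(n+\sigma^2 m i^{1+2\beta}) \asymp (m/n)^{2\beta/(1+2\beta)}$, which is of the \emph{same} order as $B^2$ --- indeed you compute precisely this when you argue coverage, and it is the whole reason coverage works here. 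With $S\asymp B$ the naive-averaging argument breaks at its last step: Chebyshev bounds the posterior mass of $\{\theta:\|\theta-\hat\theta\|_2\gtrsim m^{\beta/(1+2\beta)}n^{-\beta/(1+2\beta)}\}$ only by a constant of order $S^2/B^2$, not by $o(1)$. Nor can the route be repaired by a worse choice of $\theta_0$: for any $\theta_0\in H^\beta(M)$ one has $B^2\le C_\beta M^2 (m/n)^{2\beta/(1+2\beta)}$, while $S^2\asymp C_\beta' (m/n)^{2\beta/(1+2\beta)}$ with $C_\beta'$ independent of $M$, so for small $M$ the bias need not even exceed the spread and the event $\{\|\theta-\hat\theta\|_2\ge B - o(B)\}$ has posterior mass close to one rather than zero.

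What actually yields the first display is a small-ball argument that does not use the bias at all: by Anderson's inequality (already used by the paper in Section \ref{sec: likelihood_average}) the $\mathbf\Pi_{VI}$-mass of the ball of radius $c\,m^{\beta/(1+2\beta)}n^{-\beta/(1+2\beta)}$ around $\theta_0$ is at most its mass around the posterior mean $\hat\theta$, i.e.\ $\PP\bigl(\sum_i t_i^2\xi_i^2\le c^2 m^{2\beta/(1+2\beta)}n^{-2\beta/(1+2\beta)}\bigr)$ with $\xi_i$ i.i.d.\ standard normal, and since $\sum_i t_i^2\asymp (m/n)^{2\beta/(1+2\beta)}$ while $\sum_i t_i^4=o\bigl((\sum_i t_i^2)^2\bigr)$, this (data-independent) quantity tends to zero once $c$ is below the spread constant. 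The paper's own write-up disposes of this part by referring to Section \ref{sec: naive_avg}, but it records in the same proof that spread and squared bias are of the same order --- exactly what your claim $S\ll B$ contradicts; the missing ingredient in your proposal is this recentring/small-ball step, not bookkeeping of constants.
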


\begin{proof}
The proof of the theorem can be found in Section \ref{sec: gpoe}.
\end{proof}

\bigskip

\subsection{Summary of results for non-adaptive methods}
\label{sec: summary}

We have seen that the various methods for aggregation
of the local posteriors can give quite different results. 
The methods we considered produce  different global 
``posterior'' measures. 
Depending on the relation between the bias and variance 
of the global posterior mean and the spread of this 
global posterior, the posterior contraction rate 
and coverage probabilities of credible sets can have different behaviours. 
We summarize our findings in Table \ref{tab: sum}.
This is certainly not meant to be an exhaustive list of methods, 
but rather  an illustration of how the design
of distributed procedures can affect their fundamental performance.

\begin{table}
\begin{center}
\begin{tabular}{|llcc|}
\hline
Method & Description & Optimal rate & Coverage\\
\hline
\hline
I &  naive averaging & no & no\\
II & adjusted likelihoods, averaging & yes & no\\
III & adjusted priors, averaging & yes & yes\\
IV & adjusted likelihoods, barycenter & yes & yes\\
V & product of experts & no & no\\
VI & generalized product of experts & no & yes\\
\hline
\end{tabular}
\caption{Performance of the various non-adaptive methods.}
\label{tab: sum}
\end{center}
\end{table}

Simulations further illustrate the theoretical results. 
We have considered a  true signal $\theta$ consisting 
of the Fourier coefficients of the function shown in the left panel of Figure \ref{fig: signal2}.
This is a signal which has regularity $\beta = 1$ in the sense of \eqref{eq: hb}. 
For this signal we simulated data according to \eqref{eq: local}, with $\sigma = 1$, $n=4800$ and $m = 40$, 
i.e.\ we considered a distributed setting with $m =40$ machines. 
For the sake of comparison, the right panel of Figure \ref{fig: signal2} shows the signal reconstruction
and uncertainty quantification for the non-distributed method which first aggregates all 
data in a single machine and then computes the posterior corresponding to the prior 
$\Pi(\cdot\given \alpha)$ defined by \eqref{def: prior}, with $\alpha = \beta$.
This is a method which is known to have an optimal convergence rate and correct quantification of uncertainty.
This classical, non-distributed result should be compared to Figure \ref{fig: dis2}, which  visualizes the ``posteriors'' generated by each of the 
distributed methods I--VI. 

In accordance with our theoretical results, we see that 
the results of methods III and IV are comparable with the non-distributed method. 
Methods I, V and VI have worse signal reconstruction. The posterior mean of Method II is comparable to that of the optimal methods, 
but the uncertainty is underestimated.

\begin{figure}
\includegraphics[scale=0.3]{true.pdf}
\includegraphics[scale=0.3]{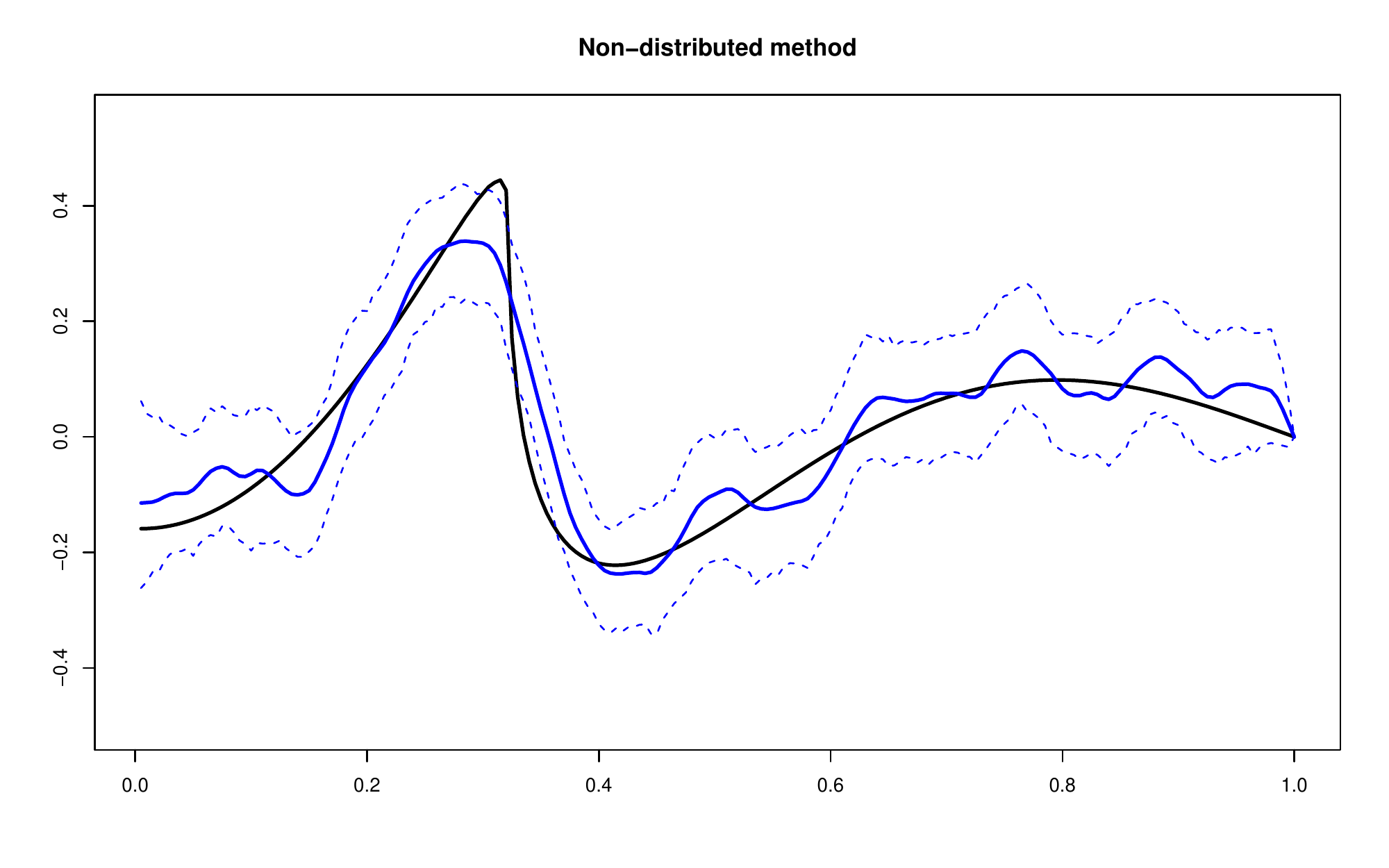}
\caption{Left: true signal. Right: posterior mean (blue solid curve)  and $95\%$ pointwise credible bands (dashed blue curves) for
the non-distributed method.}\label{fig: signal2}
\end{figure}

\begin{figure}
\includegraphics[scale=0.3]{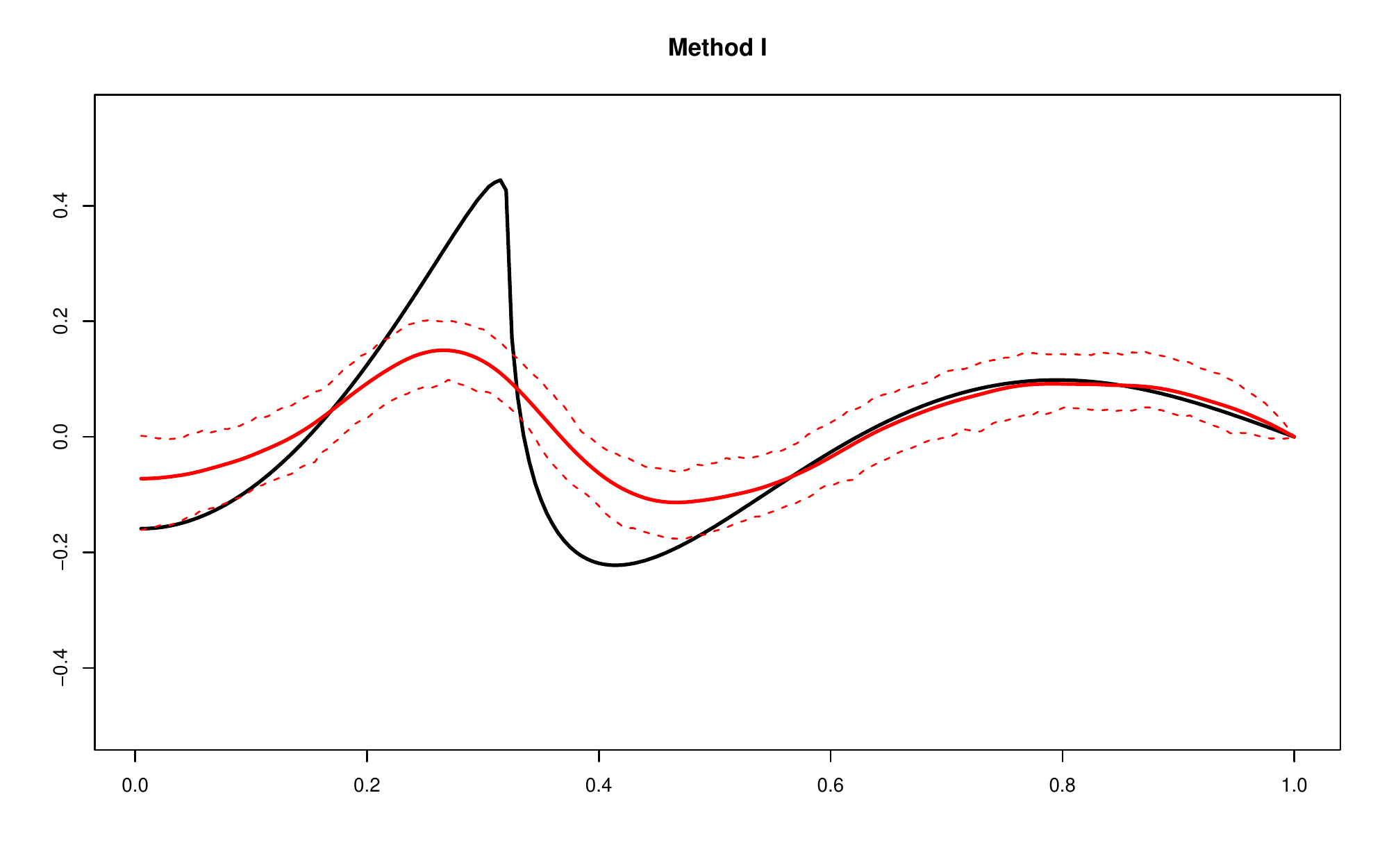}
\includegraphics[scale=0.3]{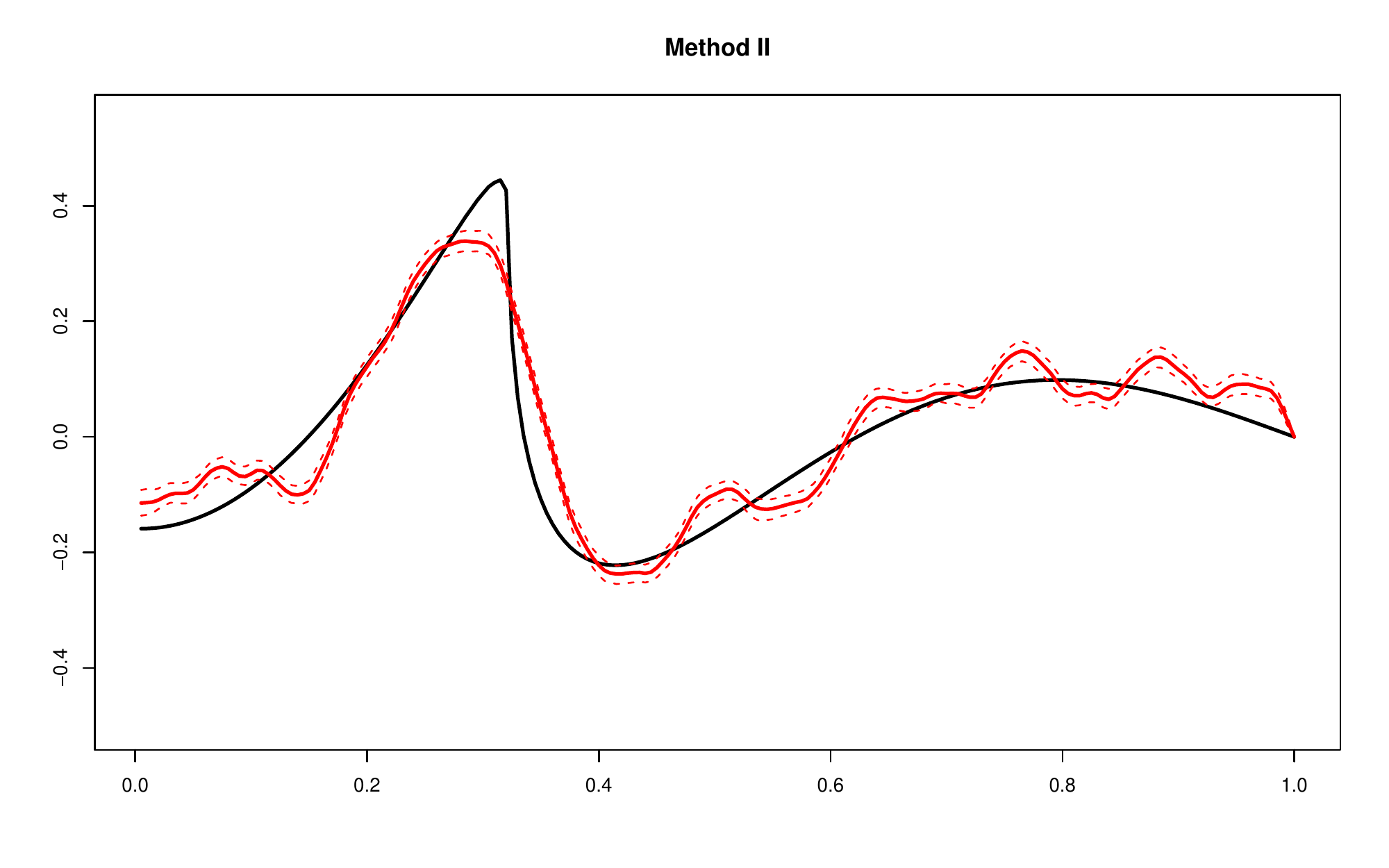}
\includegraphics[scale=0.3]{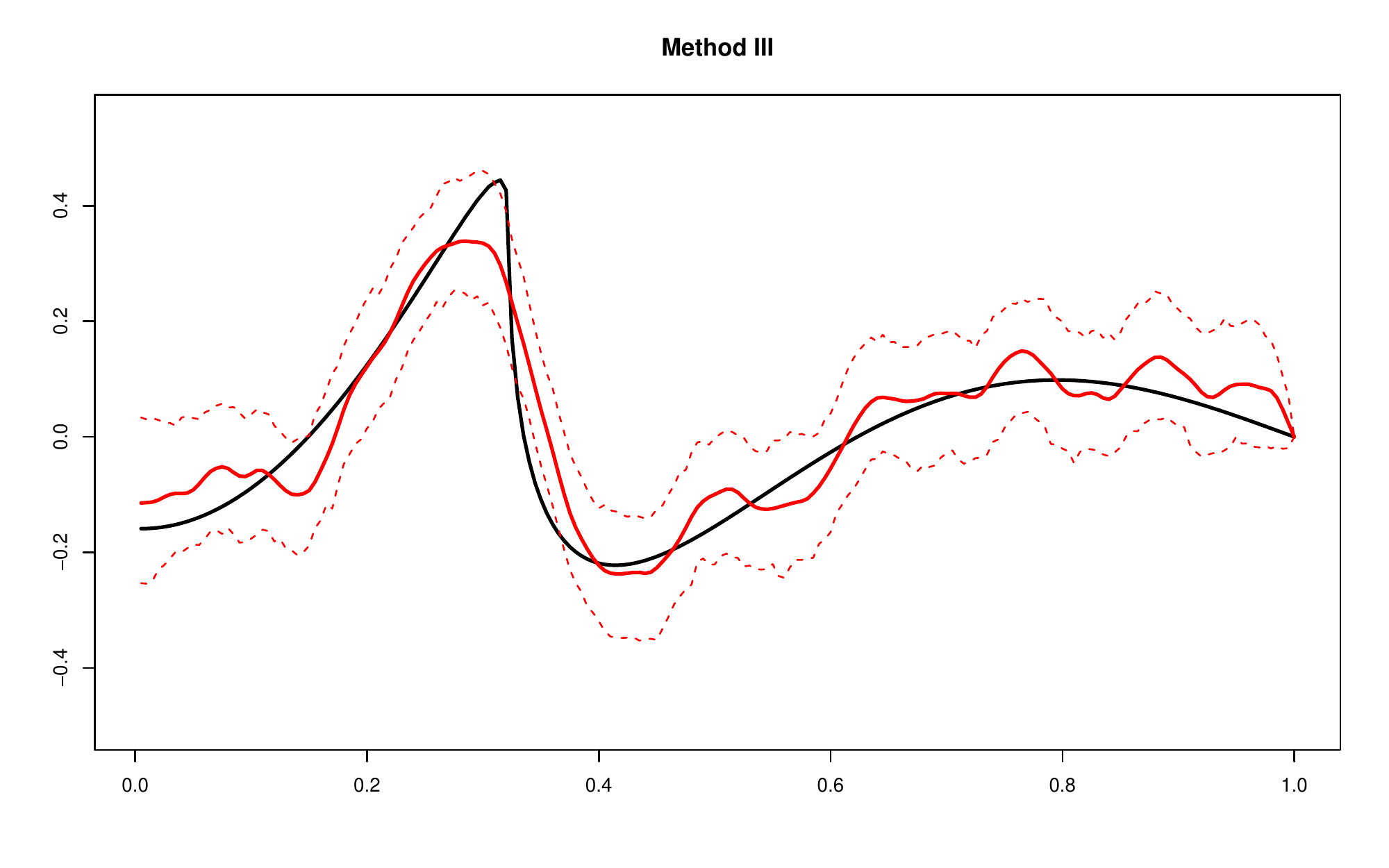}
\includegraphics[scale=0.3]{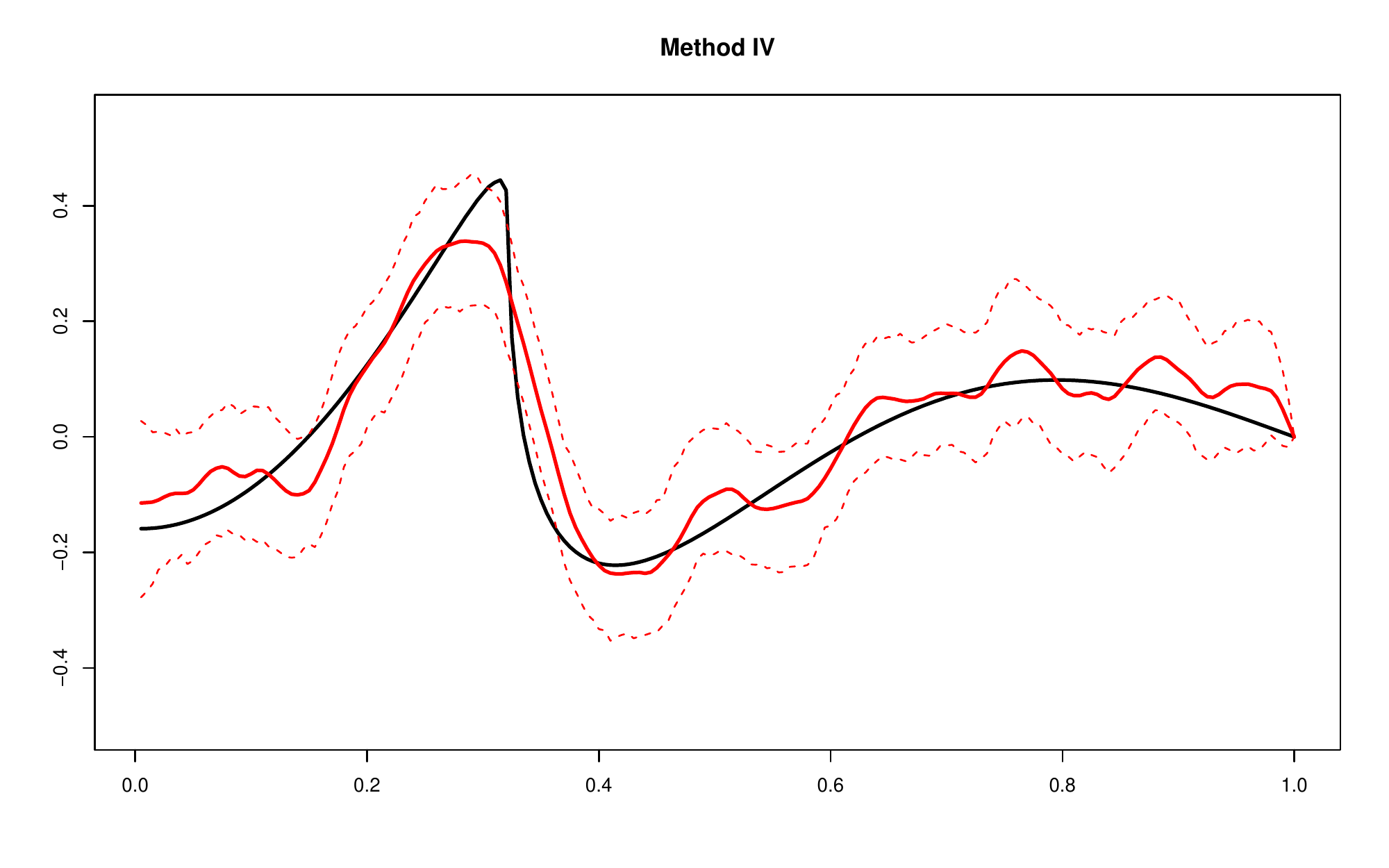}
\includegraphics[scale=0.3]{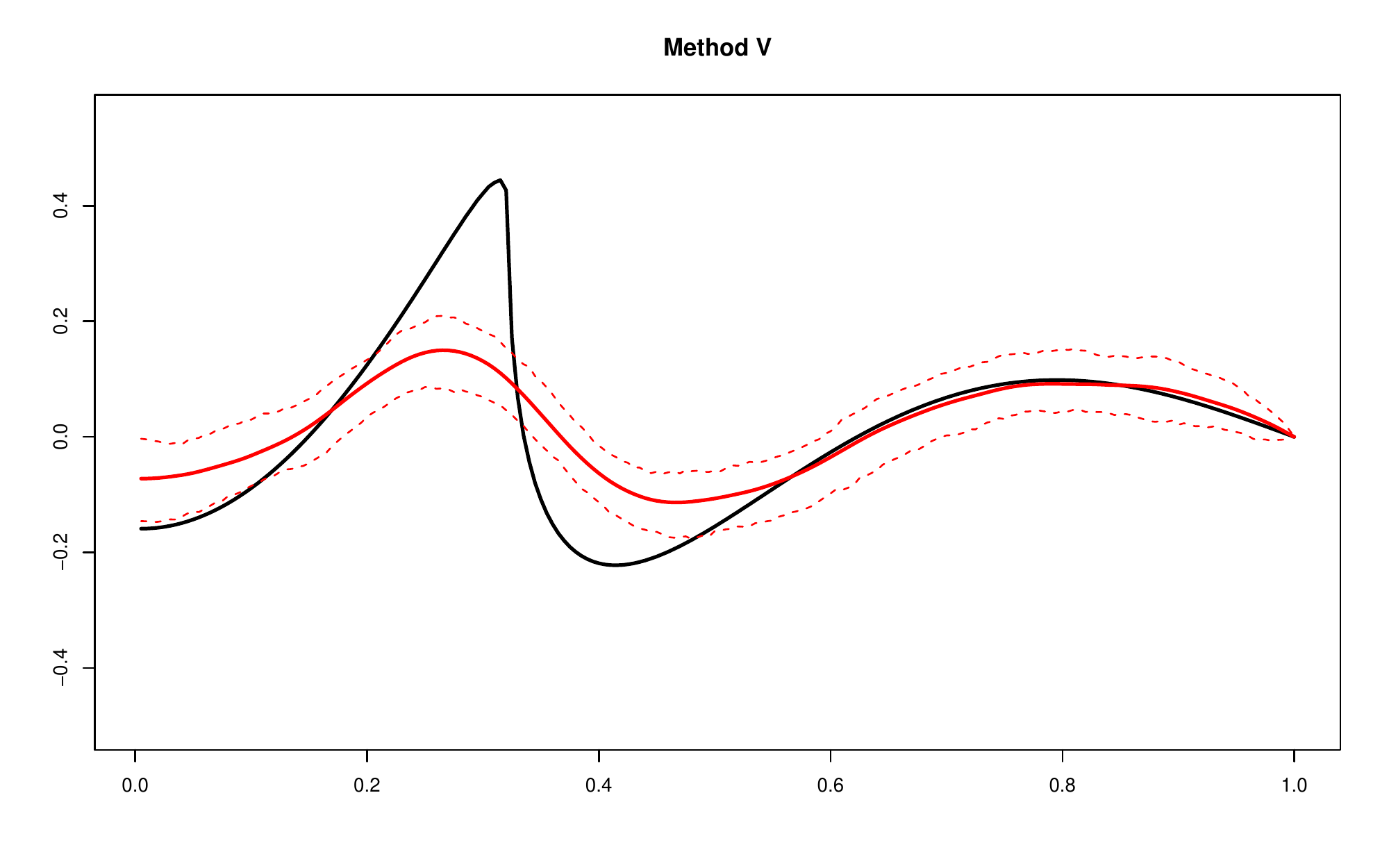}
\includegraphics[scale=0.3]{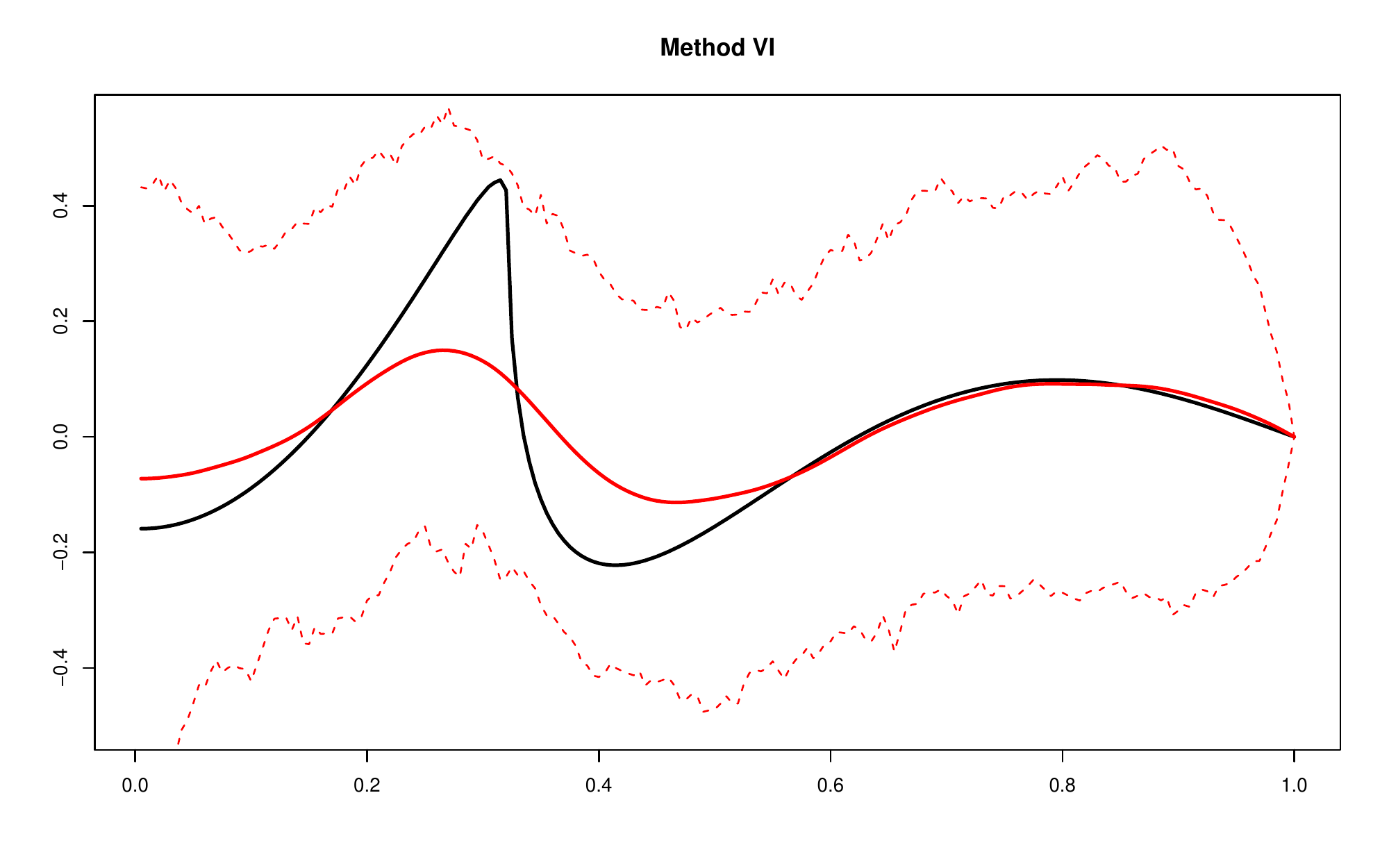}
\caption{Global posterior mean (solid red curve) and $95\%$ pointwise credible bands (dashed red curves) for each of the methods I--VI.}
\label{fig: dis2}
\end{figure}

An important observation to make is that the methods that 
achieve the same optimal performance as non-distributed 
methods, all use information about the regularity $\beta$ 
of the unkown signal, mostly through the setting of tuning parameters in 
the priors. In that sense, they are non-adaptive.
They serve as useful results that indicate what is possible 
in principle if we have certain oracle knowledge about the 
truth we are trying to learn. To understand what realistic 
procedures can achieve 
this has to be combined with insight into what can be 
learned about this oracle knowledge from the data. 
In the next section we address this issue in the context
of our distributed signal-in-white-noise model.

\section{Results for adaptive procedures}
\label{sec: adap}

In the non-distributed case it is well known that there 
exist  adaptive methods that  achieve the same 
optimal performance as non-adaptive procedures,  without 
using knowledge of the regularity $\beta$
of the unkown signal. 
These methods somehow succeed in correctly trading off
bias, variance (and spread in Bayesian methods) in 
a purely data-driven manner.  For several such result in 
the context of the signal-in-white-noise model, see, for instance, 
\cite{gine:nickl:2016} and the references therein.
For distributed methods the issue of adaptation appears
to be a lot more subtle. In this paper we only have a first, 
negative result on adaptive properties of distributed methods. 

So now we do not  assume that we know  the true 
regularity $\beta$ of the unknown signal. 
As before we employ the prior $\Pi(\cdot\given\alpha)$  
in the local machines. 
To tune the regularity parameter $\alpha$ of the prior 
we  consider a distributed version of maximum marginal 
likelihood, as proposed by \cite{deisenroth:2015}.
The usual, non-distributed version of that method 
would use the maximizer of the  map
\[
\alpha \mapsto \log \int \Big(\prod_{j=1}^m p(\mathbf Y^j \given \theta)\Big)\,\Pi(d\theta \given \alpha)
\]
as tuning parameter. 
Maximizing this function however requires 
having all data available in a central machine. In the distributed setting, 
\cite{deisenroth:2015} argue that this map is well approximated by the 
map 
\[
\alpha \mapsto \sum_{j=1}^m \log \Big(\int  p(\mathbf Y^j \given \theta)\,\Pi(d\theta \given \alpha)\Big).
\]
Now every term in the sum just depends on one of the local machines and 
this function can be maximized on the central machine by repeatedly
asking the local machines for function evaluations and gradients 
of the local log-marginal likelihoods
\[
\log \int  p(\mathbf Y^j \given \theta)\,\Pi(d\theta \given \alpha).
\] 
The resulting estimator is denoted by $\hat\alpha$, i.e.\
\[
\hat\alpha = \argmax_{\alpha \in [0, \log n]}
\sum_{j=1}^m \log \Big(\int  p(\mathbf Y^j \given \theta)\,\Pi(d\theta \given \alpha)\Big).
\]
(We maximize over a compact interval to ensure that the maximizer exists.)

It turns out  that in the distributed setting, the local machines 
are in general not able to learn enough about the true signal regularity $\beta$. 
The following lemma asserts that there exist ``difficult'' signals for which the  
estimator $\hat\alpha$ overestimates the 
regularity. 

\bigskip

\begin{lemma}\label{lem: counter MMLE}
For $\beta, M > 0$, consider a signal $\theta_0 \in \ell^2$ such that 
\begin{equation}\label{eq: counter}
\theta_{0,i}^2=\begin{cases}
M^2 i^{-1-2\beta}& \text{if $i\geq (n/(\sigma^2\sqrt{m}))^{1/(1+2\beta)}$},\\
0& \text{else}.
\end{cases}
\end{equation}
Then $\theta_0 \in H^\beta(M)$ and if $M$ is small enough, then
\begin{align}
\PP_{\theta_0}(\hat\a\ge\beta+1/2)\to 1\label{eq:help}
\end{align}
if $n/m \to \infty$ and $m \to \infty$.
\end{lemma}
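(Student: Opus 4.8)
The plan is to compute the local log-marginal likelihood explicitly (it is Gaussian, so this is feasible) and show that its sum over the $m$ machines, as a function of $\alpha$, is with high probability increasing on $[0,\beta+1/2]$, which forces the maximizer $\hat\alpha$ to lie at or above $\beta+1/2$. First I would write down, for each machine $j$ and each coordinate $i$, the one-dimensional marginal density of $Y^j_i$ under the prior $N(0,i^{-1-2\alpha})$ and noise variance $\sigma^2 m/n$: this is $N(0,\, i^{-1-2\alpha}+\sigma^2 m/n)$. Hence the local log-marginal likelihood is, up to additive constants not depending on $\alpha$,
\[
\ell_j(\alpha)=-\tfrac12\sum_{i=1}^\infty\Big(\log\big(i^{-1-2\alpha}+\tfrac{\sigma^2 m}{n}\big)+\frac{(Y^j_i)^2}{i^{-1-2\alpha}+\tfrac{\sigma^2 m}{n}}\Big),
\]
and $\hat\alpha$ maximizes $L(\alpha):=\sum_{j=1}^m\ell_j(\alpha)$. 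Differentiating in $\alpha$ and taking expectations under $\theta_0$ (using $\EE_{\theta_0}(Y^j_i)^2=\theta_{0,i}^2+\sigma^2 m/n$) gives an explicit deterministic function $\alpha\mapsto \EE_{\theta_0} L'(\alpha)$ whose sign I need to control.

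The key point is the structure of the "difficult" signal: $\theta_{0,i}^2$ equals the maximal allowed value $M^2 i^{-1-2\beta}$ precisely on the high-frequency range $i\ge i_n:=(n/(\sigma^2\sqrt m))^{1/(1+2\beta)}$ and vanishes below it. On this range the signal looks exactly like a draw from a $\beta$-regular prior, but crucially the noise level per machine is $\sigma^2 m/n$, which is comparable to $\theta_{0,i}^2$ only near $i\sim i_n$ and is dominated by the signal for $i$ somewhat above $i_n$; combined with the fact that the cutoff sits at the scale dictated by the \emph{reduced} precision $n/\sqrt m$ rather than $n$, the local machine is fooled into "seeing" a signal that is smoother than $\beta$. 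Concretely, I would show that $\EE_{\theta_0}L'(\alpha)>0$ for all $\alpha\le\beta+1/2$ (for $M$ small enough), by splitting the defining sum at $i_n$, estimating each piece by comparison with integrals, and checking that the contribution of the coordinates $i\ge i_n$ — where $\log i$ factors appear with a favorable sign because the posterior variance there wants a larger $\alpha$ — dominates. This monotonicity-in-expectation is the analytic heart of the argument.

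To pass from the expectation to the probability statement \eqref{eq:help}, I would control the fluctuations of $L'(\alpha)$ around its mean uniformly in $\alpha\in[0,\beta+1/2]$. Since $L'(\alpha)$ is a sum of $m$ i.i.d.\ (across $j$) terms, each itself a convergent sum of independent centered quadratic forms in the Gaussians $Z^j_i$, I would bound $\Var_{\theta_0}L'(\alpha)$ coordinatewise and obtain a bound of smaller order than $(\EE_{\theta_0}L'(\alpha))^2$ as $m\to\infty$, $n/m\to\infty$; a uniform version over the compact interval follows from an equicontinuity/chaining argument or simply a union bound over a fine grid together with a Lipschitz estimate for $L'$. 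Then $\PP_{\theta_0}(L'(\alpha)>0 \text{ for all } \alpha\le\beta+1/2)\to1$, and since a differentiable function with positive derivative on $[0,\beta+1/2]$ cannot be maximized in the interior of that subinterval, the maximizer over $[0,\log n]$ must satisfy $\hat\alpha\ge\beta+1/2$, which is \eqref{eq:help}. The routine check that $\theta_0\in H^\beta(M)$ is immediate from \eqref{eq: counter}, since $\sup_i i^{1+2\beta}\theta_{0,i}^2=M^2$.

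The main obstacle I anticipate is the uniform-in-$\alpha$ control: getting a clean lower bound on $\EE_{\theta_0}L'(\alpha)$ that holds for \emph{all} $\alpha$ up to $\beta+1/2$ (not just at a single point) while simultaneously showing the stochastic fluctuations are negligible requires careful bookkeeping of the sums over $i$, and the balance is delicate precisely because at $\alpha=\beta$ the signal is marginally in the "noise-dominated at the cutoff" regime — one must exploit the $\sqrt m$ rather than $m$ in the cutoff $i_n$ to get the strict inequality, and verify that the needed smallness of $M$ does not conflict with the lower-bound estimate.
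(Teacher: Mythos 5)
Your plan is essentially the paper's own proof: the paper likewise reduces \eqref{eq:help} to showing that $\alpha\mapsto\sum_j\ell_j(\alpha)$ is, with $\PP_{\theta_0}$-probability tending to one, strictly increasing on $[0,\beta+1/2]$, importing from \cite{knapik:etal:2016} the uniform positivity of the expected score and the uniform control of its fluctuations (the two steps you propose to redo by hand via variance bounds and a grid/chaining argument), so that the only remaining work is exactly your split-at-the-cutoff integral comparison, there phrased as verifying $h_k(\alpha)\le l$ for all $\alpha\le\beta+1/2$ when $M$ is small. One corrective note on your heuristic: the mechanism is not that the coordinates $i\ge i_n$ contribute a dominant favorable term — since $\theta_{0,i}^2\le M^2\sigma^2\sqrt m/n\ll \sigma^2 m/n$ the signal never rises above the local noise level, so in $\EE_{\theta_0}L'(\alpha)$ the (negative) signal term is negligible while the positive prior/noise term, carried by frequencies $i\lesssim (n/(\sigma^2 m))^{1/(1+2\alpha)}$, dominates, and that is the inequality your computation should establish.
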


\begin{proof}
The proof is given in Section \ref{sec: counter MMLE}.
\end{proof}

\bigskip

In view of Lemma \ref{lem: counter MMLE} it is perhaps not surprising that 
if the approximated maximum marginal likelihood estimator $\hat\alpha$ 
is used  to tune the local prior that is used in every machine, sub-optimal performance 
is obtained for certain truths. Intuitively this is because due to the smaller 
signal-to-noise ratio, or ``sample size'' in the local machines, certain truths may appear 
more regular than they really are. It turns out that using the estimator $\hat\alpha$ 
in combination with any of the methods considered in the preceding section indeed 
leads to sub-optimal rates and bad coverage probabilities for certain truths.
As an illustration we present a rigorous statement for the method of Section \ref{sec: was}, 
but similar results can be derived for the others methods as well.

So suppose that in every local problem the prior $\Pi(\cdot\given \alpha)$ 
is used,   the corresponding generalized posterior 
$\tilde \Pi^j(\cdot \given \mathbf Y^j)$ is computed locally (which involves 
raising the local likelihood to the power $m$), and then the tuning parameter 
$\alpha$ is substituted by the estimator $\hat\alpha$ defined above.
In the central machine,  the global ``posterior'' $\mathbf \Pi_{VII}(\cdot\given \mathbf Y)$
is constructed as the 2-Wasserstein barycenter
of the local ``posterior'' measures $\tilde \Pi^1(\cdot \given \hat\alpha, \mathbf Y^1), 
\ldots, \tilde \Pi^m(\cdot \given \hat\alpha, \mathbf Y^m)$.

\bigskip

\begin{theorem}\label{thm: counter MMLEB:Wasser}
For $\beta, M > 0$  and $\theta_0$ 
as in Lemma \ref{lem: counter MMLE} we have, for some $c > 0$,  
\begin{align*}
\EE_{\theta_0}\mathbf\Pi_{VII}(\theta:\, \|\theta-\theta_0\|_2\leq c(n/\sqrt m)^{-\frac{\beta}{1+2\beta}}|\mathbf{Y} )\rightarrow 0
\end{align*}
as $m \to \infty$ and $n/m \to \infty$. Furthermore, for all $L>0$ it holds that 
\[
\PP_{\theta_0}\big(\theta_0\in \hat{C}(L) \big)\rightarrow 0.
\]

\end{theorem}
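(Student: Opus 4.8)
The plan is to make $\mathbf\Pi_{VII}$ completely explicit and then reduce the statement to the bias--variance--spread bookkeeping that already underlies Theorems~\ref{thm: naive_avg}--\ref{thm: gpoe}. Raising the $j$-th local likelihood to the power $m$ turns the effective local noise variance $\sigma^2m/n$ into $\sigma^2/n$, so $\tilde\Pi^j(\cdot\given\hat\alpha,\mathbf Y^j)=\bigotimes_i N\bigl(\tfrac{n}{n+\sigma^2i^{1+2\hat\alpha}}Y^j_i,\,\tfrac{\sigma^2}{n+\sigma^2i^{1+2\hat\alpha}}\bigr)$. These $m$ Gaussians share one and the same (data-independent, diagonal) covariance, and the $2$-Wasserstein barycenter of such Gaussians is the Gaussian with that covariance and with mean the coordinatewise average of the means --- exactly as in the proof of Theorem~\ref{thm: likelihood_Wasserstein}. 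Hence
\[
\mathbf\Pi_{VII}(\cdot\given\mathbf Y)=\bigotimes_i N(\hat\theta_i,t_i^2),\qquad \hat\theta_i=\frac{n\,\bar Y_i}{n+\sigma^2i^{1+2\hat\alpha}},\quad t_i^2=\frac{\sigma^2}{n+\sigma^2i^{1+2\hat\alpha}},\quad \bar Y_i=\theta_{0,i}+\sqrt{\sigma^2/n}\,\tilde Z_i,
\]
i.e.\ exactly the non-distributed conjugate posterior of ``sample size'' $n$ and regularity $\hat\alpha$, with the data-driven exponent $\hat\alpha$ plugged in.

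The second step is to localize $\hat\alpha$. By Lemma~\ref{lem: counter MMLE} --- more precisely by the argument behind it: for this $\theta_0$ the local signal-to-noise ratio is so small that already at the most informative coordinate $i_0:=(n/(\sigma^2\sqrt m))^{1/(1+2\beta)}$ one has $\theta_{0,i_0}^2/(\sigma^2m/n)=M^2/\sqrt m\to0$, which pushes the marginal likelihood criterion to strongly oversmooth --- there is a threshold $\alpha^*=\alpha^*_n$ with $\PP_{\theta_0}(\hat\alpha\ge\alpha^*)\to1$ for which the induced prior cut-off $N_{\alpha^*}:=(n/\sigma^2)^{1/(1+2\alpha^*)}$ is of order at most $i_0$. (When $m\lesssim n^{1/(1+\beta)}$ one may take $\alpha^*=\beta+\tfrac12$ and invoke Lemma~\ref{lem: counter MMLE} verbatim; for faster growing $m$ the same computation delivers a correspondingly larger threshold.) Put $A_n=\{\hat\alpha\ge\alpha^*\}$, $\rho_n:=(n/\sqrt m)^{-\beta/(1+2\beta)}$, and note $i_0^{-\beta}\asymp\rho_n$ and $\|\theta_0\|_2^2\asymp M^2\rho_n^2$. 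The spread $S^2(\alpha):=\sum_i\tfrac{\sigma^2}{n+\sigma^2i^{1+2\alpha}}$ is non-increasing in $\alpha$ and, since $\sigma^2i_0^{1+2\beta}/n=m^{-1/2}$, satisfies $S^2(\alpha^*)\asymp\sigma^2N_{\alpha^*}/n=o(\rho_n^2)$; the squared bias $B^2(\alpha):=\sum_{i\ge i_0}\bigl(\tfrac{\sigma^2i^{1+2\alpha}}{n+\sigma^2i^{1+2\alpha}}\bigr)^2M^2i^{-1-2\beta}$ is non-decreasing in $\alpha$, and since the shrinkage factor exceeds $\tfrac12$ for $i\ge N_{\alpha^*}$ while $N_{\alpha^*}\lesssim i_0$, one gets $B^2(\alpha^*)\gtrsim M^2\rho_n^2$. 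Hence on $A_n$ we have $S^2(\hat\alpha)=o(\rho_n^2)$ and $B^2(\hat\alpha)\ge B^2(\alpha^*)\gtrsim M^2\rho_n^2$.

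From here the conclusion is a Gaussian concentration argument. Write $\hat\theta_i-\theta_{0,i}=-\kappa_i\theta_{0,i}+(1-\kappa_i)\sqrt{\sigma^2/n}\,\tilde Z_i$ with $\kappa_i=\sigma^2i^{1+2\hat\alpha}/(n+\sigma^2i^{1+2\hat\alpha})$. Since $1-\kappa_i$ is non-increasing in $\hat\alpha$, Cauchy--Schwarz bounds the cross term on $A_n$ by $\tfrac14\|\theta_0\|_2\bigl(\tfrac{\sigma^2}{n}\sum_i(1-\kappa_i(\alpha^*))^2\tilde Z_i^2\bigr)^{1/2}=O_\PP(\|\theta_0\|_2\sqrt{\sigma^2N_{\alpha^*}/n})=o_\PP(\rho_n^2)$, while, $\kappa_i$ being non-decreasing in $\hat\alpha$, $\sum_i\kappa_i^2\theta_{0,i}^2\ge B^2(\alpha^*)$ on $A_n$; so $\|\hat\theta-\theta_0\|_2^2\ge B^2(\alpha^*)-o_\PP(\rho_n^2)\ge c_1M^2\rho_n^2$ with $\PP_{\theta_0}$-probability tending to one, for some $c_1>0$. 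A draw $\theta\sim\mathbf\Pi_{VII}(\cdot\given\mathbf Y)$ has $\theta_i=\hat\theta_i+t_iW_i$ with $W_i$ i.i.d.\ standard normal, so $\|\theta-\theta_0\|_2^2=\|\hat\theta-\theta_0\|_2^2+2\sum_i(\hat\theta_i-\theta_{0,i})t_iW_i+\sum_i t_i^2W_i^2$ has posterior mean $U:=\|\hat\theta-\theta_0\|_2^2+S^2(\hat\alpha)$ and posterior variance $\le(4\|\hat\theta-\theta_0\|_2^2+2S^2(\hat\alpha))\sigma^2/n\le4U\sigma^2/n$, using $\max_i t_i^2\le\sigma^2/n$; Chebyshev then gives
\[
\mathbf\Pi_{VII}\bigl(\,\|\theta-\theta_0\|_2^2\le\tfrac12 U\,\big|\,\mathbf Y\,\bigr)\le\frac{16\,\sigma^2/n}{U}.
\]
On the high-probability event above $U\ge c_1M^2\rho_n^2$ while $n\rho_n^2=n^{1/(1+2\beta)}m^{\beta/(1+2\beta)}\to\infty$, so the right-hand side is $o(1)$; and for $c<\tfrac12\sqrt{c_1}\,M$ we have $c^2\rho_n^2<\tfrac12U$, whence $\mathbf\Pi_{VII}(\|\theta-\theta_0\|_2\le c\rho_n\given\mathbf Y)\to0$ in $\PP_{\theta_0}$-probability, and the first display follows on taking expectations and adding $\PP_{\theta_0}(A_n^c)\to0$. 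For the coverage claim, $\|\theta-\hat\theta\|_2^2=\sum_i t_i^2W_i^2$ has posterior mean $S^2(\hat\alpha)$ and variance $\le2S^2(\hat\alpha)\sigma^2/n$, so $r_\gamma\le2S(\hat\alpha)\le2S(\alpha^*)=o(\rho_n)$ w.h.p.\ on $A_n$, whereas $\|\theta_0-\hat\theta\|_2\ge\sqrt{c_1}\,M\rho_n$ w.h.p.; hence for every fixed $L$, $\|\theta_0-\hat\theta\|_2>Lr_\gamma$ for $n$ large, i.e.\ $\PP_{\theta_0}(\theta_0\in\hat C(L))\to0$.

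The crux is the localization of $\hat\alpha$; everything after it is routine Gaussian computation. What is needed is not merely that $\hat\alpha$ overshoots $\beta$ (which is Lemma~\ref{lem: counter MMLE}) but that it overshoots enough that the induced prior cut-off $N_{\hat\alpha}$ stays of order at most $i_0$ --- so that the oversmoothing wipes out the whole signal rather than only part of it --- and this must hold on an event of $\PP_{\theta_0}$-probability tending to one, so that the dependence of $\hat\alpha$ on the noise can be absorbed via the monotonicity in $\alpha$ of $B^2$ and $S^2$. Quantifying this marginal-likelihood overshoot for signals lying entirely below the local noise floor, uniformly over the admissible range of $m$, is the main technical obstacle.
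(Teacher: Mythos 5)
Your overall route is the same as the paper's: you derive the explicit Gaussian form of $\mathbf\Pi_{VII}$ (common diagonal covariance, barycenter mean equal to the average of the local means, exactly as in the proof of Theorem \ref{thm: likelihood_Wasserstein}), you work on an event where $\hat\alpha$ exceeds a deterministic threshold, you use monotonicity in $\alpha$ of the squared bias, the noise term and the spread to absorb the data-dependence of $\hat\alpha$, and you finish with Chebyshev bounds on the posterior mass and on $r_\gamma$. The paper organizes the same bookkeeping slightly differently (triangle inequality between $\|E(\alpha)-\theta_0\|_2$, increasing in $\alpha$, and $\|E(\alpha)-\hat\theta(\alpha)\|_2$, decreasing in $\alpha$, instead of your explicit cross-term estimate), and your bound $r_\gamma\le 2S(\hat\alpha)$ is cleaner via Markov ($r_\gamma\le S(\hat\alpha)/\sqrt\gamma$), since for $\hat\alpha$ close to the endpoint $\log n$ the variance of $\sum t_i^2W_i^2$ need not be negligible relative to $S^4(\hat\alpha)$; these are cosmetic points.

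The genuine gap is the step you yourself flag as the crux. Lemma \ref{lem: counter MMLE} only gives $\hat\alpha\ge\beta+1/2$, and, as you correctly compute, on that event the surviving squared bias is of order $M^2\min\bigl(i_0^{-2\beta},\,(n/\sigma^2)^{-\beta/(1+\beta)}\bigr)$ with $i_0=(n/(\sigma^2\sqrt m))^{1/(1+2\beta)}$; this equals the target order $M^2(n/\sqrt m)^{-2\beta/(1+2\beta)}$ only when the cut-off $(n/\sigma^2)^{1/(2+2\beta)}$ is $\lesssim i_0$, i.e.\ when $m\lesssim n^{1/(1+\beta)}$. For larger $m$ you postulate a bigger threshold $\alpha^*_n$ with $N_{\alpha^*_n}\lesssim i_0$ and $\PP_{\theta_0}(\hat\alpha\ge\alpha^*_n)\to1$, but you never establish it: ``the same computation delivers a correspondingly larger threshold'' is exactly the content that would have to be verified (extending the bound $h_k(\alpha)\le l$ in the proof of Lemma \ref{lem: counter MMLE} from $[0,\beta+1/2]$ to $[0,\alpha^*_n]$, together with the uniform control of the centered score over that longer interval), and you explicitly defer it as ``the main technical obstacle.'' As written, your argument therefore proves the theorem only in the regime $m\lesssim n^{1/(1+\beta)}$, not for all sequences with $m\to\infty$, $n/m\to\infty$. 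It is worth noting that your diagnosis is sharp: the paper's own proof also works only on $\{\hat\alpha\ge\beta+1/2\}$ and asserts that $M^2\sum_{i\ge i_0}\sigma^4i^{3+2\beta}(n+\sigma^2 i^{2+2\beta})^{-2}$ is of order $M^2(n/\sqrt m)^{-2\beta/(1+2\beta)}$ without restricting $m$, which by the same Riemann-sum computation degrades to the smaller order $M^2(n/\sigma^2)^{-\beta/(1+\beta)}$ once $m\gg n^{1/(1+\beta)}$; but identifying the needed strengthening of the overshoot is not the same as proving it, and your proposal does not.
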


\begin{proof}
See Section \ref{sec: counter_MMLEB:Wasser}.
\end{proof}

\bigskip

A simulation illustrating the theoretical result of theorem is 
given in Figure \ref{fig: dis}. The left panel visualizes the 
``posterior'' generated by method VII, in the same distributed setting, 
and using the same simulated data as 
considered in Section \ref{sec: summary}.

So when combined with  a  data-driven 
tuning method like the distributed version of maximum marginal likelihood 
considered here, 
even the distributed methods that perform well in the non-adaptive
setting loose their favourable properties. None of the methods yields
a procedure that automatically adapts to regularity and achieves the optimal non-distributed 
rate. This does not imply of course that such an adaptive method does not exist. 
We expect however that the matter is delicate and that fundamental limitations 
exist. 

The issue appears to be similar to that of the existence of 
adaptive confidence sets. To achieve adaptation in our distributed 
setting the local machines must be able to learn the ``global'' regularity 
of the signal from the limited local data that they have available. 
Analogous to the adaptive confidence problem we expect that this is in general 
only possible under additional assumptions on the true signal, 
like the self-similarity or polished tail conditions proposed for instance 
in \cite{ginŽ2010}, \cite{bull2012}, \cite{szabo:etal:2015}—, \cite{nickl2016}, 
\cite{belitser2017}. Making these admittedly somewhat loose claims
mathematically precise will take considerably more  effort, but seems 
an important and interesting direction for future work.

\appendix

\section{Proofs for Section \ref{sec: nonadapt}}
\label{sec: proofs}

\subsection{Proof of Theorem \ref{thm: naive_avg}}\label{sec: naive_avg}

By completing the square we see  that under the local posterior 
$\Pi^j(\cdot \given  \mathbf Y^j)$
the coefficients $\theta_i$ are  independent and Gaussian, 
with mean $\hat\theta^j_i$ and variance $s^2_i$ given by 
\[
\hat\theta^j_i = \frac{n}{n+\sigma^2mi^{1+2\beta}}Y^j_i, \qquad
s^2_i = \frac {\sigma^2m}{n+\sigma^2mi^{1+2\beta}}.
\]
Hence the global ``posterior'' $\mathbf\Pi_I(\cdot\given \mathbf Y)$ is  Gaussian 
as well, and under that measure the coefficients $\theta_i$ are independent and have
 mean $\hat\theta_i$ and 
variance $t^2_i$ given by 
\[
\hat\theta_i = \frac{1}{m}\sum_{j=1}^m \hat\theta^j_i, \qquad
t^2_i = \frac{s^2_i}{m}.
\]
For the global posterior mean we have, for every $\theta_0 \in \ell^2$,  
\[
\EE_{\theta_0}\, \hat\theta_i -\theta_ {0,i} = 
\frac{-\sigma^2mi^{1+2\beta}}{n+\sigma^2mi^{1+2\beta}}\theta_{0,i}, 
\qquad 
\Var_{\theta_0}\, \hat\theta_i = \frac{\sigma^2n}{(n+\sigma^2mi^{1+2\beta})^2},
\]
and hence, 
\begin{align*}
\EE_{\theta_0} \|\hat\theta-\theta_0\|^2_2  = \sum\frac{\sigma^4m^2i^{2+4\beta}}{(n+\sigma^2mi^{1+2\beta})^2}\theta_{0,i}^2 + 
\sum \frac{\sigma^2n}{(n+\sigma^2mi^{1+2\beta})^2}.
\end{align*}
By   Lemma A.1 of  \cite{szabo:etal:2013} the second, variance term is of the order 
\[
 m^{-1/(1+2\beta))}n^{-2\beta/(1+2\beta)},
\]
as $n/m \to \infty$. 
For  $\theta^2_{0,i}=Mi^{-1-2\beta}$, 
by the same lemma, the first, squared bias term is proportional to 
$(n/m)^{-2\beta/(1+2\beta)}$.
For the global spread, we have
\begin{align}
\sum t^2_i = \sum \frac {\sigma^2}{n+\sigma^2mi^{1+2\beta}} \asymp  m^{-1/(1+2\beta))}n^{-2\beta/(1+2\beta)}.\label{eq: naive_avg_postspread}
\end{align}

By the triangle inequality we have
\begin{align*}
& \mathbf \Pi_{I}(\|\theta-\theta_0\|_2\leq c m^{\frac{\beta}{1+2\beta}}n^{-\frac{\beta}{1+2\beta}}|\mathbf{Y})\\
&\leq \mathbf\Pi_{I}(\theta:\,\| \EE_{\theta_0}\hat\theta-\theta_0\|_2- c m^{\frac{\beta}{1+2\beta}}n^{-\frac{\beta}{1+2\beta}}-\|\hat\theta-\EE_{\theta_0}\hat\theta\|_2\leq\| \theta-\hat\theta\|_2 |\mathbf{Y}).
\end{align*}
It follows from the bounds on the variance and squared bias of the posterior mean 
that for $\theta_0$ as chosen above, the quantity 
\[
\| \EE_{\theta_0}\hat\theta-\theta_0\|_2- c m^{\frac{\beta}{1+2\beta}}n^{-\frac{\beta}{1+2\beta}}-
\|\hat\theta-\EE_{\theta_0}\hat\theta\|_2
\]
appearing in the posterior probability 
 is with $\PP_{\theta_0}$-probability tending to one bounded from below by $c m^{\frac{\beta}{1+2\beta}}n^{-\frac{\beta}{1+2\beta}}$ for $c>0$ small enough. 
 Then by the upper bound for the posterior spread and Chebyshev's inequality we 
 obtain the first statement of the theorem.

For the coverage we note that the radius $r_{\gamma}$ of the credible set is a multiple of $m^{-1/(2+4\beta)}n^{-\beta/(1+2\beta)}$, which follows from the Gaussianity of the posterior and $\eqref{eq: naive_avg_postspread}$. Then by similar computations as above we get that
for the same truth $\theta_0$, 
\begin{align*}
\PP_{\theta_0}(\theta_0\in \hat{C}(L))&=\PP_{\theta_0}(\|\hat\theta-\theta_0\|_2\leq L  r_\gamma)\\
&\leq \PP_{\theta_0}\big(\|\hat\theta-\EE_{\theta_0}\hat\theta\|_2\geq \|\EE_{\theta_0}\hat\theta-\theta_0\|_2-Lr_\gamma\big)\\
&\leq  \PP_{\theta_0}\big(\|\hat\theta-\EE_{\theta_0}\hat\theta\|_2\geq c m^{\frac{\beta}{1+2\beta}}n^{-\frac{\beta}{1+2\beta}} \big)\\
&\lesssim  m^{\frac{-2\beta}{1+2\beta}}n^{\frac{2\beta}{1+2\beta}} \EE_{\theta_0}\|\hat\theta-\EE_{\theta_0}\hat\theta\|_2^2
\lesssim  m^{-1}\rightarrow 0.
\end{align*}
This completes the proof of the theorem.

\subsection{Proof of Theorem \ref{thm: likelihood_average}}\label{sec: likelihood_average}

Raising the  local likelihood \eqref{eq: lik} to the power $m$ makes it proportional to 
\[
\prod e^{-\frac12 \frac{n(Y^j_i-\theta_i)^2}{\sigma^2}}, 
\]
which is the likelihood for the case $m=1$. 
It follows that under the generalized local posterior $\tilde \Pi^j(\cdot \given  \mathbf Y^j)$
the coefficients $\theta_i$ are  independent and Gaussian, 
with mean $\hat\theta^j_i$ and variance $s^2_i$ given by 
\[
\hat\theta^j_i = \frac{n}{n+\sigma^2i^{1+2\beta}}Y^j_i, \qquad
s^2_i = \frac {\sigma^2}{n+\sigma^2i^{1+2\beta}}.
\]
Hence the global ``posterior'' $\mathbf\Pi_{II}(\cdot\given  \mathbf Y)$ is  again Gaussian, and under this global measure the coefficients $\theta_i$ are independent and have
 mean $\hat\theta_i$ and 
variance $t^2_i$ given by 
\[
\hat\theta_i = \frac{1}{m}\sum_{j=1}^m \hat\theta^j_i, \qquad
t^2_i = \frac{s^2_i}{m}.
\]
For the global posterior mean we have in this case, for every $\theta_0 \in \ell^2$,  
\[
\EE_{\theta_0}\, \hat\theta_i -\theta_ {0,i} = 
\frac{-\sigma^2i^{1+2\beta}}{n+\sigma^2i^{1+2\beta}}\theta_{0,i}, 
\qquad 
\Var_{\theta_0}\, \hat\theta_i = \frac{\sigma^2n}{(n+\sigma^2i^{1+2\beta})^2},
\]
and hence, 
\begin{align*}
\EE_{\theta_0} \|\hat\theta-\theta_0\|^2_2  = \sum\frac{\sigma^4i^{2+4\beta}}{(n+\sigma^2i^{1+2\beta})^2}\theta_{0,i}^2 + 
\sum \frac{\sigma^2n}{(n+\sigma^2i^{1+2\beta})^2}.
\end{align*}
For all $\theta_0 \in H^\beta(M)$, 
the squared bias term is bounded by 
\[
M^2 \sum \frac{\sigma^4i^{1+2\beta}}{(n+\sigma^2i^{1+2\beta})^2}
\lesssim M^2 n^{-2\beta/(1+2\beta)}
\]
for large $n$, and the variance term behaves like a constant times 
$n^{-2\beta/(1+2\beta)}$ as well. 
The global spread $\sum t^2_i$ is of the order 
$m^{-1}n^{-2\beta/(1+2\beta)}$ for large $n$.

For $M_n \to \infty$ and $\theta_0 \in H^\beta(M)$ we now have, by the triangle inequality, 
\begin{align*}
&\mathbf \Pi_{II}(\theta:\, \|\theta-\theta_0\|_2\geq M_n n^{-\beta/(1+2\beta)}\given \textbf{Y})\\
&\leq \mathbf \Pi_{II}(\theta:\, \|\theta-\hat\theta\|_2\geq M_n n^{-\beta/(1+2\beta)}-
\|\hat\theta-\EE_{\theta_0}\hat\theta\|_2-\|\theta_0-\EE_{\theta_0}\hat\theta\|_2\given\textbf{Y}).
\end{align*}
By the bounds on the bias and the variance of the posterior mean derived above the quantity on the right of the inequality in the last posterior probability is bounded from below by $(M_n/2) n^{-\beta/(1+2\beta)}$ 
with $P_{\theta_0}$-probability tending to one as $n, m \to \infty$, uniformly 
in $\theta_0 \in H^\beta(M)$.
By Chebychev's inequality, and the bound on the posterior spread, we conclude that 
the first statement of the theorem holds.

For the second statement we first note that 
by Chebychev's inequality and by the upper bound on the posterior spread the radius $r_\gamma$
of the credible set is for large $n$ bounded by  $Cm^{-1/2}n^{-\beta/(1+2\beta)}$ for some $C > 0$.
Hence, since the posterior mean is Gaussian, Anderson's inequality implies that 
\begin{align*}
\PP_{\theta_0}\big( \theta_0 \in \hat{C}(L)\big)&\leq \PP_{\theta_0}
\big(\|\hat\theta-\theta_0\|_2\leq CL m^{-1/2}n^{-\beta/(1+2\beta)} \big)\\
&\leq  \PP_{\theta_0}\big(\|\hat\theta-\EE_{\theta_0}\hat\theta\|_2\leq C L m^{-1/2}n^{-\beta/(1+2\beta)} \big).
\end{align*}
By Chebychev's inequality, 
\[
\PP_{\theta_0}\big(\|\hat\theta-\EE_{\theta_0}\hat\theta\|^2_2\leq 
\sum \sigma^2_i - a\sqrt{2\sum\sigma^4_i} \big) \le \frac 1{a^2}
\]
for all $a > 0$, where $\sigma^2_i = \Var_{\theta_0}\hat\theta_{i}$.
Above we saw that $\sum \sigma^2_i \asymp n^{-2\beta/(1+2\beta)}$. 
Similarly, it is  easily seen that $\sum \sigma^4_i \asymp n^{(-1-4\beta)/(1+2\beta)}$.
Hence by taking $a = n^{(1/4) /(1+2\beta)}$, for instance, we see that 
for $c > 0$ small enough, 
\[
\PP_{\theta_0}\big(\|\hat\theta-\EE_{\theta_0}\hat\theta\|_2\leq c n^{-\beta/(1+2\beta)} \big) \to 0
\]
as $n \to \infty$. But then also 
\begin{align*}
\PP_{\theta_0}\big(\|\hat\theta-\EE_{\theta_0}\hat\theta\|_2\leq C L m^{-1/2}n^{-\beta/(1+2\beta)} \big)\rightarrow 0
\end{align*}
as $m,n \to \infty$.

\subsection{Proof of Theorem \ref{thm: prior_average}}\label{sec: prior_average}

In this case the $j$th  local posterior is a product of Gaussians with means 
and variances given by 
\[
\hat\theta^j_i = \frac{n}{n+\sigma^2m\tau^{-1}i^{1+2\alpha}}Y^j_i, \qquad
s^2_i = \frac {\sigma^2m}{n+\sigma^2m\tau^{-1}i^{1+2\alpha}}.
\]
As before the global ``posterior''  is  Gaussian 
as well, and under that measure the coefficients $\theta_i$ are independent and have
 mean $\hat\theta_i$ and 
variance $t^2_i$ given by 
\[
\hat\theta_i = \frac{1}{m}\sum_{j=1}^m \hat\theta^j_i, \qquad
t^2_i = \frac{s^2_i}{m}.
\]
For the global posterior mean we have, for every $\theta_0 \in \ell^2$,  
\[
\EE_{\theta_0}\, \hat\theta_i -\theta_ {0,i} = 
\frac{-\sigma^2m\tau^{-1}i^{1+2\alpha}}{n+\sigma^2m\tau^{-1}i^{1+2\alpha}}\theta_{0,i}, 
\qquad 
\Var_{\theta_0}\, \hat\theta_i = \frac{\sigma^2n}{(n+\sigma^2m\tau^{-1}i^{1+2\alpha})^2},
\]
and hence 
\begin{align*}
\EE_{\theta_0} \|\hat\theta-\theta_0\|^2_2  = \sum\frac{\sigma^4m^2\tau^{-2}i^{2+4\alpha}}{(n+\sigma^2m\tau^{-1}i^{1+2\alpha})^2}\theta_{0,i}^2 + 
\sum \frac{\sigma^2n}{(n+\sigma^2m\tau^{-1}i^{1+2\alpha})^2}.
\end{align*}
By considering Riemann sums as in Lemma A.1 of \cite{szabo:etal:2013} we see that for $\beta < 1+2\alpha$ and 
uniformly for $\theta_0 \in H^\beta(M)$, the squared bias term is bounded 
by a constant times 
\begin{align*}
 M^2(\tau n/m)^{-2\beta/(1+2\alpha)}.
\end{align*}
Similarly, the variance term and the posterior spread $\sum t^2_i$ both behave like 
a constant times 
\[
(\tau/m)^{1/(1+2\alpha)} n^{-2\alpha/(1+2\alpha)}
\]
as $n \to \infty$. The choice $\tau = m n^{2(\a-\beta)/(1+2\beta)}$ 
balances these quantities, so that  all three are of the order $n^{-2\beta/(1+2\beta)}$.

By exactly the same reasoning as in Section \ref{sec: likelihood_average}, the fact that the squared bias bound 
and the variance and spread are of the same order implies the first statement of the theorem.
For the coverage statement we first note that the squared credible set radius $r^2_\gamma$ 
is the $1-\gamma$ quantile of the distribution of $\sum t^2_i Z^2_i$, with $t^2_i$ as above and 
$Z_i$ independent standard normals. This distribution has mean 
$\sum t^2_i \asymp n^{-2\beta/(1+2\beta)}$ 
and variance 
\[
2\sum t^4_i \asymp \frac 1n n^{-2\beta/(1+2\beta)},
\] 
as can be seen by considering Riemann sums again. As the standard deviation is of smaller order 
than the mean, it follows from Chebychev's inequality that $r_\gamma \ge c n^{-\beta/(1+2\beta)}$
for some $c > 0$. For the coverage probability we then have 
\begin{align*}
\PP_{\theta_0}\Big(\theta_0\not \in \hat{C}(L) \Big) \le 
\PP_{\theta_0}\Big(\|\hat\theta - \theta_0\|_2 \ge cLn^{-\beta/(1+2\beta)}\Big)
\le \frac{n^{2\beta/(1+2\beta)}}{c^2L^2}\EE_{\theta_0} \|\hat\theta - \theta_0\|^2_2.
\end{align*}
By the bounds on the bias and variance of the posterior mean the right-hand side is 
smaller than $\gamma$ for $L$ large enough, uniformly for $\theta_0 \in H^\beta(L)$.

\subsection{Proof of Theorem \ref{thm: likelihood_Wasserstein}}\label{sec: likelihood_Wasserstein}

As we saw in Section \ref{sec: likelihood_average}, the $j$th local generalized  
posterior is a product of Gaussians with  means $\hat\theta^j_i$ and variances $s^2_i$ given by 
\[
\hat\theta^j_i = \frac{n}{n+\sigma^2i^{1+2\beta}}Y^j_i, \qquad
s^2_i = \frac {\sigma^2}{n+\sigma^2i^{1+2\beta}}.
\]
In other words, the $j$th local measure is a Gaussian measure on $\ell^2$ 
with mean $\hat\theta^j = (\hat\theta^j_i)_i$ and (diagonal) covariance operator $R: \ell^2 \to \ell^2$
given by $(R x)_i = s^2_i x_i$, which is the same for every local machine. 
The Wasserstein barycenter of a finite collection of Gaussian measures 
is a Gaussian measure again (e.g.\ \cite{agueh2011}). 
By Theorem 3.5 of \cite{gelbrich1990}  the squared $2$-Wasserstein distance
between the $j$th local measure and a Gaussian measure on $\ell^2$ 
with mean $\mu$ and covariance operator $K$ is given by 
\[
\|\hat\theta^j -\mu\|_2^2 +  \tr(R) + \tr(K) -2\tr\sqrt{R^{1/2}KR^{1/2}}.
\]
It follows that the barycenter $\mathbf \Pi_{IV}(\cdot\given \mathbf Y)$
of the local generalized posteriors is the  Gaussian measure on $\ell^2$
with mean $\hat\theta$ equal to the average of the local means $\hat\theta^j$ 
and covariance operator equal to $R$. In other words, the global ``posterior'' 
is a product of Gaussians with means and variances given by 
\[
\hat\theta_i = \frac{1}{m}\sum_{j=1}^m \hat\theta^j_i, \qquad
t^2_i = {s^2_i}.
\]
So the global posterior mean is the same as in Section \ref{sec: likelihood_average}
and the posterior spread $\sum t^2_i$ is a factor $m$ larger. It then 
follows from the considerations in Section \ref{sec: likelihood_average}
that the squared bias of the global posterior mean is bounded by a constant times
$M^2n^{-2\beta/(1+2\beta)}$, uniformly for $\theta_0 \in H^\beta(M)$. 
Moreover, the variance term $\sum s^2_i$ and the posterior spread $\sum t^2_i$ 
behave like a multiple of $n^{-2\beta/(1+2\beta)}$ as well. 
As was explained in Section \ref{sec: prior_average}, this leads to the 
statement of the theorem.

\subsection{Proof of Theorem \ref{thm: gpoe}}\label{sec: gpoe}

The proof of the first statement is the same as in Section \ref{sec: naive_avg}, 
since the mean of the global ``posterior'' is the same 
as for the naive averaging method. 

For the second statement, we observe that for $\theta_0 \in H^\beta(M)$, 
the squared bias term for the posterior mean satisfies
\[
\sum\frac{\sigma^4m^2i^{2+4\beta}}{(n+\sigma^2mi^{1+2\beta})^2}\theta_{0,i}^2  \le
M^2 \sum\frac{\sigma^4m^2i^{1+2\beta}}{(n+\sigma^2mi^{1+2\beta})^2} \lle M^2 (n/m)^{-2\beta/(1+2\beta)}.
\]
for $n/m \to \infty$. 
As was shown in Section \ref{sec: naive_avg} the variance of the posterior mean 
behaves as  $m^{-1/(1+2\beta)}n^{-2\beta/(1+2\beta)}$. Since the spread  $\sum t_i^2$
of the posterior is a factor $m$ larger than in Section \ref{sec: naive_avg}, 
it is of the same order $(n/m)^{-2\beta/(1+2\beta)}$ as the squared bias term. 
Since squared bias and spread are of the same order, the variance is of 
smaller order, and 
\[
\sqrt{\sum t^4_i} \asymp \Big(\frac nm\Big)^{\frac{-1/2}{1+2\beta}}\sum t^2_i
\]
is of lower order than $\sum t^2_i$, 
the coverage statement can be proved as in Section \ref{sec: prior_average}.

\section{Proofs for Section \ref{sec: adap}}
\label{sec: proofs2}

\subsection{Proof of Lemma \ref{lem: counter MMLE}}\label{sec: counter MMLE}

The estimator $\hat\alpha$ is the maximizer of the 
random map $\alpha \mapsto \sum_j \ell_j(\alpha)$, where
\[
\ell_j(\alpha) = \log \int  p(\mathbf Y^j \given \theta)\,\Pi(d\theta \given \alpha).
\]
The asymptotic behaviour of the local log-marginal likelihood $\ell_j$
has been studied in \cite{knapik:etal:2016}. Denote the derivative of $\ell_j$ with 
respect to $\alpha$ by $\dot\ell_j$ and let $k = n/(\sigma^2m)$ be the local ``sample size''. 
Moreover, for $l > 0$, define 
\[
\underline \alpha = \inf\{\alpha > 0: h_k(\alpha) > l\}\wedge \sqrt{\log k},
\]
where 
\[
h_k(\alpha) = \frac{1+2\alpha}{k^{1/(1+2\alpha)}\log k}\sum_i 
\frac{k^2i^{1+2\alpha}\theta^2_{0,i}\log i}{(k + i^{1+2\alpha})^2}.
\]
Note that the expectation $\EE_{\theta_0}\dot\ell_j(\alpha)$ does not 
depend on $j$. It is proved in Section 5.3 of \cite{knapik:etal:2016} that if $l$ is smaller 
than some universal threshold, then for every $j$
\[
\liminf_{k \to \infty} \inf_{\alpha \le \underline\alpha} 
\frac{1+2\alpha}{k^{1/(1+2\alpha)}\log k} \EE_{\theta_0}\dot\ell_j(\alpha) =\delta > 0,
\]
\[
\EE_{\theta_0} \sup_{\alpha \le \underline\alpha} 
\frac{1+2\alpha}{k^{1/(1+2\alpha)}\log k} |\dot\ell_j(\alpha) - \EE_{\theta_0}\dot\ell_j(\alpha)|
\lle e^{-C\sqrt{\log k}} 
\]
for constants $\delta, C > 0$. But then we also have 
\[
\liminf_{k \to \infty} \inf_{\alpha \le \underline\alpha} 
\frac{1+2\alpha}{k^{1/(1+2\alpha)}\log k} \EE_{\theta_0}\sum_j\dot\ell_j(\alpha) > m\delta
\]
and
\[
\EE_{\theta_0} \sup_{\alpha \le \underline\alpha} 
\frac{1+2\alpha}{k^{1/(1+2\alpha)}\log k} \Big|\sum_j\dot\ell_j(\alpha) - 
\EE_{\theta_0}\sum_j \dot\ell_j(\alpha)\Big|
\lle m e^{-C\sqrt{\log k}}. 
\]
By Markov's inequality, it follows that with probability at least $1-C_1\exp(-C_2\sqrt{k})$
the map $\alpha \mapsto \sum_j \ell_j(\alpha)$ is strictly increasing on 
the interval $[0, \underline\alpha]$. Hence, on that event we have $\hat\alpha \ge \underline\alpha$.

It remains to show that $\underline\alpha \ge \beta +1/2$. 
To that end it suffices to prove 
that $h_{k}(\alpha) \le l$ for all $\alpha \le \beta + 1/2$. 
To see this, suppose first  that $\a< \beta$. Define $N_{\beta}= (n/(\sigma^2\sqrt{m}))^{1/(1+2\beta)}$ 
and $M_{\a}=k^{1/(1+2\a)}$. 
By  definition of $\theta_0$ we then have
\begin{align*}
h_{k}(\a)&=\frac{M^2}{M_{\a}\log M_\a} \sum_{i=N_\b}^{\infty}\frac{k^2i^{2\a-2\beta}\log i}{(i^{1+2\a}+k)^2}\\
&\leq \frac{M^2}{M_{\a}\log M_\a} \sum_{i=N_\b}^{M_\a}i^{2\a-2\beta}\log i
+ \frac{M^2k^2}{M_{\a}\log M_\a} \sum_{i=M_\a}^{\infty}i^{-2-2\a-2\beta}\log i\\
&\leq M^2\frac{M_{\a}N_{\beta}^{2\a-2\beta}\log M_{\a}}{M_{\a}\log M_{\a}}
+M^2\frac{k^2M_{\a}^{-1-2\a-2\beta}\log M_{\a}}{M_{\a}\log M_{\a}}\\
& \lle M^2 
\end{align*}
for $n,m$ large enough. Hence, if $M$ is small enough, then $h_{k} \le l$
for $\alpha < \beta$. 
For $\beta \le \a\leq \beta+1/2$ we have 
\begin{align*}
h_{k}(\a)&\leq \frac{M^2k^2}{M_{\a}\log M_\a} \sum_{i=N_\b}^{\infty}i^{-2-2\a-2\beta}\log i\\
&\leq \frac{M^2k^2N_{\beta}^{-1-2\a-2\beta}\log N_{\beta}}{M_{\a}\log M_{\a}}\\
&=  M^2(n/\sigma^2)^{\frac{2\alpha}{1+2\a}-\frac{2\a}{1+2\beta}}m^{-2+\frac{1}{1+2\a}+\frac{1+2\a+2\b}{2(1+2\beta)}}\frac{\log N_\b}{\log M_{\a}} \lle m^{-\frac{2\a}{1+2\a}} \log m
\end{align*}
for $n/m$ large enough. 
Together, this shows that if both $n/m$ and $m$ are large enough, 
then indeed $h_{k}(\alpha) \le l$ for all $\alpha \le \beta + 1/2$.

\subsection{Proof of Theorem \ref{thm: counter MMLEB:Wasser}}\label{sec: counter_MMLEB:Wasser}
In view of the proof of Theorem \ref{thm: likelihood_Wasserstein} the $j$th local generalized
posterior  
is a product of Gaussians with  means $\hat\theta^j_i$ and variances $s_i^2$ given by 
\[
\hat\theta^j_i = \frac{n}{n+\sigma^2i^{1+2\hat\alpha}}Y^j_i, \qquad
s_i^2 = \frac {\sigma^2}{n+\sigma^2i^{1+2\hat\alpha}}.
\]
Using again that  the Wasserstein barycenter of a finite collection of Gaussian measures 
is a Gaussian measure in combination with the explicit expression for the 2-Wasserstein 
distance between Gaussians (see Section \ref{sec: likelihood_Wasserstein}) we see 
that the global ``posterior'' is a product of Gaussians with  means $\hat\theta_i$ and variances $t^2_i$ given by
\[
\hat\theta_i = \frac1m\sum_{j=1}^m \hat\theta^j_i, \qquad
t^2_i = s^2_i.
\]

The posterior mean can be written as $\hat\theta = \hat\theta(\hat\alpha)$, 
where $\hat\theta(\alpha)$ is the estimator with a fixed choice $\alpha$ for the hyperparameter, 
i.e.\ 
\[
\hat\theta_i(\alpha) = \frac1 m \sum_{j=1}^m \frac{n}{n+\sigma^2i^{1+2\alpha}}Y^j_i.
\]
For fixed $\alpha$ we also define the corresponding 
expectation $E(\alpha)  = \EE_{\theta_0} \hat\theta(\alpha)$.
Then by the triangle inequality, 
\[
\| \hat\theta - \theta_0\|_2 \ge \|E(\hat\alpha) - \theta_0\|_2 
- \|E(\hat\alpha) - \hat\theta(\hat\alpha)\|_2.
\]
We have the explicit expressions
\[
\| E(\alpha) - \theta_0\|^2_2  = \sum_i \frac{\sigma^4i^{2+4\alpha}\theta^2_i}
{(n+\sigma^2i^{1+2\alpha})^2}
\]
and
\[
\|E(\alpha) - \hat\theta(\alpha)\|^2_2 = \sum_i 
\frac{\sigma^2n}{(n+\sigma^2i^{1+2\alpha})^2}\Big(\frac1 {\sqrt m} \sum_{j=1}^m 
Z^j_i\Big)^2.
\]
Since the first expression is increasing in $\alpha$ and the 
second one is decreasing, we see that 
on the event $A = \{\hat\alpha\ge \beta+ 1/2\}$ it holds that 
\[
\| \hat\theta - \theta_0\|_2 \ge \sqrt{\sum_i \frac{\sigma^4\theta^2_{0,i} i^{4+4\beta}}{(n+\sigma^2i^{2+2\beta})^2}}
- \sqrt{\sum_i 
\frac{\sigma^2n}{(n+\sigma^2i^{2+2\beta})^2}\Big(\frac1 {\sqrt m} \sum_{j=1}^m 
Z^j_i\Big)^2}.
\]
By definition of $\theta_0$, the square of the first term on the right is bounded from below 
by 
\begin{align*}
M^2\sum_{i \ge (n/\sqrt m)^{1/(1+2\beta)}} \frac{ \sigma^4i^{3+2\beta}}{(n+\sigma^2i^{2+2\beta})^2}. 
\end{align*}
By comparing this to the corresponding Riemann sum we see that it is of 
the order $M^2(n/\sqrt m)^{-2\beta/(1+2\beta)}$.
The square of the second term can be written as 
\[
\sum_i 
\frac{\sigma^2n}{(n+\sigma^2 i^{2+2\beta})^2}U_i^2,
\]
with the $U_i$ independent and standard normal under $\PP_{\theta_0}$. 
By considering Riemann sums again, for instance, it is easily seen that 
the mean and variance of this sum behave
as $n^{-(1+2\beta)/(2+2\beta)}$ and 
$n^{-(3+4\beta)/(2+2\beta)}$, respectively. Hence the standard deviation is 
of smaller order than the mean for large $n$, so that by Chebychev's inequality the 
square of the second term is of stochastic order $n^{-(1+2\beta)/(2+2\beta)}$. 
Since this is of smaller order  than $(n/\sqrt m)^{-2\beta/(1+2\beta)}$, 
we conclude that for the global ``posterior'' mean we have, for some constant $c > 0$, 
\[
\PP_{\theta_0} (\| \hat\theta - \theta_0\|_2 \ge c (n/\sqrt m)^{-\beta/(1+2\beta)})
\to 1
\]
as $n/m \to \infty$ and $m \to \infty$. 
The spread $\sum t^2_i$ of the global posterior is on the event $A$ bounded by 
\[
\sum_i \frac1{n+i^{2+2\beta}}, 
\]
which is of the order $n^{-(1+2\beta)/(2+2\beta)} \ll (n/\sqrt m)^{-2\beta/(1+2\beta)}$ as well. 
The conclusions of the theorem now follow.

\newpage

\bibliographystyle{harry}
\bibliography{references}

\end{document}